\documentclass[12pt]{amsart}

\usepackage{cite}
\usepackage{amsmath,amssymb,amsfonts,amsthm,mathtools}
\usepackage{mathrsfs}
\usepackage{geometry}
\usepackage{longtable}
\usepackage{booktabs}
\usepackage{array}
\usepackage[colorlinks=true,linkcolor=blue,citecolor=blue,urlcolor=blue]{hyperref}
\mathtoolsset{showonlyrefs}
\numberwithin{equation}{section}
\allowdisplaybreaks
\newtheorem{theorem}{Theorem}[section]
\newtheorem{proposition}[theorem]{Proposition}
\newtheorem{lemma}[theorem]{Lemma}
\newtheorem{corollary}[theorem]{Corollary}
\theoremstyle{definition}

\newtheorem{remark}[theorem]{Remark}

\newcommand{\F}{\mathbb F}
\newcommand{\A}{\mathscr A}
\newcommand{\Ps}{\mathcal P}
\newcommand{\QP}{Q\mathcal P}
\newcommand{\GL}{\operatorname{GL}}
\newcommand{\Ext}{\operatorname{Ext}}

\newcommand{\Ker}{\ker}

\newcommand{\rank}{\operatorname{rank}}
\newcommand{\Sq}{\operatorname{Sq}}

\newcommand{\Nfrak}{\mathfrak N}
\newcommand{\wt}{\omega}
\newcommand{\alphaTwo}{\alpha}
\newcommand{\RP}{\mathbb{R}P}

\title[The rank-five Peterson hit problem]{The rank-five Peterson hit problem, the fifth Singer transfer, and a geometric generator in unoriented cobordism}
\author{Phuc Vo Dang}
\address{Department of Mathematics, FPT University, An Phu Thinh New Urban Area, Quy Nhon, Vietnam}
\email{dangphuc150488@gmail.com}
\thanks{ORCID: \url{https://orcid.org/0000-0002-6885-3996}}
\keywords{Peterson hit problem, Steenrod algebra, Kameko homomorphism, Singer transfer, modular invariants, unoriented cobordism, Milnor hypersurface, Stiefel--Whitney number}
\subjclass[2020]{Primary 55S10; Secondary 55T15, 57R75, 13A50}

\begin{document}

\begin{abstract}
The Peterson hit problem, which seeks a minimal set of generators for the polynomial algebra $\Ps_s=\F_2[x_1,\ldots,x_s]$ as an unstable module over the Steenrod algebra $\A$, remains a fundamental challenge in algebraic topology. While solutions for lower ranks are well-understood, rank five constitutes a computational frontier where general admissible bases fail, and the interplay between Kameko periodicity and modular invariants becomes exceptionally complex. In this paper, we address this problem by studying the rank-five cohit module in the generic family $N_d=27\cdot 2^d-5$. 

This family is governed simultaneously by Kameko periodicity, through $N_d=2N_{d-1}+5$, and by the fifth-line Adams classes $h_{d+4}f_{d-1}$, whose internal degree is $5+N_d=27\cdot 2^d$. Exact sparse elimination in degree $N_1=49$ gives $292825$ monomials, hit rank $289969$, and cohit dimension $2856$. The six nonzero weight summands have dimensions $1891, 280, 25, 5, 480, 175$. The weight-$(3,3,2,2,1)$ summand is exactly the kernel of Kameko's operation and has dimension $1891$; the remaining five summands map isomorphically onto the degree-$22$ cohit module. These values correct the corresponding rank-five kernel and dimension assertions in Nguyen Khac Tin's previous paper \cite{Tin2022}.

An exact invariant calculation shows that $(\QP_5)_{49}^{\GL(5,\F_2)}$ is one-dimensional, generated by the class of a $283$-term polynomial $\zeta$, and that the fifth Singer cohomological transfer is an isomorphism in the stated family. On the geometric side, the Hilbert--Poincar\'e series of the unoriented cobordism ring gives $\dim_{\F_2}\Nfrak_{49}=5692$ and $\dim_{\F_2}(Q\Nfrak_*)_{49}=1$. We prove that the Milnor hypersurface $H_{2,48}\subset \RP^2\times\RP^{48}$ represents the nonzero indecomposable class by computing $\langle s_{49}(TH_{2,48}),[H_{2,48}]\rangle =\binom{50}{2}\equiv1\pmod2$. Thus $H_{2,48}$ is an explicit geometric generator in dimension $49$. The evident map $H_{2,48}\to BV_2\to BV_5$, however, sends its fundamental class to a homology class with nonzero $\Sq_*^2$. Consequently, this geometric generator cannot be identified with the functional dual of $[\zeta]$ through the tautological construction. The resulting statement is both geometric and exact: an explicit generator of $(Q\Nfrak_*)_{49}$ is obtained, while the Steenrod-theoretic invariant line remains a distinct construction.
\end{abstract}

\maketitle

\section{Introduction}\label{sec:introduction}

Unoriented cobordism and Steenrod operations are linked at the foundations of stable homotopy theory. The Pontryagin--Thom construction identifies the unoriented bordism group of closed smooth \(n\)-manifolds with \(\pi_n(MO)\), and the universal Thom class \(U\) satisfies
\[
\Sq^i(U)=w_iU.
\]
Relations among Steenrod operations therefore impose relations among Stiefel--Whitney characteristic numbers, which detect unoriented bordism classes. Thom's calculation, Milnor's structural formulation, and Stong's systematic treatment give
\[
\Nfrak_*
\cong
\F_2[\xi_n\mid n\geq1,\ n\neq2^j-1],
\qquad |\xi_n|=n;
\]
see \cite{Thom1952,Thom1954,Milnor1960,Stong1968}. Thus a new polynomial generator occurs in every positive degree outside the exceptional set \(\{2^j-1\}\).

The polynomial algebra in the Peterson hit problem \cite{Peterson1989} enters this circle of ideas through split real vector bundles. Put \(V_s=(\mathbb Z/2)^s\), and let \(L_i\to BV_s\) be the real line bundle classified by the \(i\)-th coordinate character. Then
\[
H^*(BV_s;\F_2)=\F_2[x_1,\ldots,x_s]=:\Ps_s,
\qquad x_i=w_1(L_i),
\]
and the universal split bundle \(L_1\oplus\cdots\oplus L_s\) has total Stiefel--Whitney class \(\prod_i(1+x_i)\). The Peterson hit problem asks for a minimal homogeneous set of \(\A\)-generators of \(\Ps_s\), or equivalently for the quotient
\[
\QP_s:=\F_2\otimes_{\A}\Ps_s
\cong \Ps_s/\A^+\Ps_s.
\]
Its dual is the space of \(\A\)-annihilated homology classes of \(BV_s\), and the dual of its \(\GL(s,\F_2)\)-fixed subspace is the source of Singer's rank-\(s\) algebraic transfer \cite{Singer1989,Singer1991,WalkerWood2018}. This construction links the hit problem to the Adams \(E_2\)-term, but it does not identify \(\QP_s\) with the indecomposable quotient of \(\Nfrak_*\). That distinction is essential in the geometric analysis below.

Recent work has sharpened the computational methods on both sides of this transfer. For instance, on the transfer side, Hung and Truong \cite{HungTruong2022} established image and stability results for the Singer transfer associated with infinite real projective space. The target side has developed in parallel. Bruner and Rognes constructed a minimal resolution of \(\A\) in the range \(0\leq s\leq128\) and \(0\leq t\leq200\), together with chain maps for the represented cocycles and for the operation \(\Sq^0\) on \(\Ext_{\A}^{*,*}(\F_2,\F_2)\) \cite{BrunerRognes2022}. Lin's noncommutative Gr\"obner-basis method provides another effective approach to computing \(\Ext\)-groups for filtered graded algebras, with the Steenrod algebra and the Adams spectral sequence among its principal applications \cite{Lin2025}. These independent advances substantially enlarge the range in which proposed transfer targets and their \(\Sq^0\)-families can be checked. The sparse elimination and fixed-space calculation undertaken here belong to this two-sided computational framework: the source requires exact Steenrod reduction and modular invariants, whereas the target requires independently verifiable Adams \(E_2\)-data.

The surrounding theory of Steenrod operations and bordism has likewise remained active. Agarwal, Grbi\'c, Intermont, Jovanovi\'c, Lagoda, and Whitehouse derive explicit mod-\(2\) Steenrod actions and cochain-level formulas for polyhedral products and related spaces \cite{AgarwalEtAl2025}. More directly relevant to the geometric part of the present work, Luecke gives a conceptual derivation of the dual Steenrod algebra from universal properties of bordism spectra, using real orientations and the role of \(MO\) in an essential way \cite{Luecke2026}. Campillo, Guan, L\"u, and Uribe have recently proposed an equivariant extension of the Milnor map and have emphasized once again the importance of explicit Milnor hypersurfaces and explicit generators in unitary and unoriented bordism \cite{CampilloEtAl2026}. These perspectives reinforce the distinction maintained throughout this paper: algebraic indecomposables in \(H^*(BV_5;\F_2)\) are governed by the Steenrod action and general-linear invariance, whereas indecomposables in \(\Nfrak_*\) are detected by characteristic numbers and represented by manifolds.

Rank five is the first rank for which no general admissible basis is available and for which simultaneous control of Kameko kernels and modular invariants becomes substantial. Generic degrees provide a systematic reduction. Kameko's squaring operation relates degree \(2m+s\) to degree \(m\), and it is an isomorphism when the corresponding value of Wood's function \(\mu\) equals \(s\) \cite{Kameko1990,Wood1989}. Work of Nam \cite{Nam2004} and subsequent rank-five calculations shows that a family of the form \(s(2^d-1)+m2^d\) is controlled by a finite seed computation together with the sequence of its \(\mu\)-values \cite{P.S1,Phuc2020,Sum2015}. In this paper, we study the family
\[
N_d=5(2^d-1)+22\cdot2^d=27\cdot2^d-5.
\]
The recurrence \(N_d=2N_{d-1}+5\) gives Kameko maps
\[
(\QP_5)_{N_d}\longrightarrow(\QP_5)_{N_{d-1}}.
\]
One has \(N_0=22\), \(N_1=49\), \(\mu(N_1)=3\), and \(\mu(N_d)=5\) for every \(d\geq2\). Hence all degrees with \(d\geq2\) are transported isomorphically to degree \(49\), whereas the map from degree \(49\) to degree \(22\) has a nontrivial kernel. Independently,
\[
5+N_d=2^{d+4}+11\cdot2^d,
\]
which is the internal degree of the fifth-line class \(h_{d+4}f_{d-1}\) for \(d\geq1\) \cite{Lin2008,Chen2011}. The same family is therefore selected both by Kameko periodicity on the source side and by the \(\Sq^0\)-families on the Adams side. The specific choice of seed $m=22$ (yielding the family $N_d=27\cdot 2^d-5$) is motivated by its dual role as a pivotal testing ground for both algebraic and geometric structures. First, within the hit problem, this family is governed simultaneously by Kameko periodicity and the Adams fifth-line classes $h_{d+4}f_{d-1}$, making it a critical site to verify the alignment between Steenrod-theoretic generators and Adams-Novikov differentials. Second, the degree $N_1=49$ provides an ideal balance of complexity and tractability: it is sufficiently large to exhibit a non-trivial Kameko kernel---thereby necessitating a refined invariant calculation---yet small enough to allow for a complete geometric realization. Therefore, we systematically investigate this family to bridge the gap between algebraic indecomposables and unoriented cobordism generators. We show that while the Steenrod-theoretic invariant line and the cobordism generator both span one-dimensional spaces, they are essentially distinct constructions. This dual perspective allows us to clarify recent assertions in the literature and establishes a precise boundary between the algebraic hit problem and the geometric topology of Milnor hypersurfaces.

The degree-\(49\) computation is exact over \(\F_2\). Among the \(292825\) degree-\(49\) monomials, sparse elimination gives hit rank \(289969\), and hence a \(2856\)-element cohit basis. The basis decomposes into six weight blocks of dimensions
\[
1891,\ 280,\ 25,\ 5,\ 480,\ 175.
\]
The weight-\((3,3,2,2,1)\) block is exactly the Kameko kernel. It contains \(910\) basis monomials with a zero exponent and \(981\) with all exponents positive. Independent invariant calculations in \texttt{SageMath} and \texttt{OSCAR} show that this block contains the unique \(\GL(5,\F_2)\)-fixed line, represented by a polynomial \(\zeta\) with \(283\) distinct admissible monomials. The functional dual to this line transfers to \(h_5f_0\), and Kameko periodicity propagates the transfer statement through the family. These exact dimensions correct the corresponding assertions in Nguyen Khac Tin's previous paper \cite{Tin2022}. The positive-variable part of the degree-\(49\) Kameko kernel has dimension \(981\), not \(1178\), and the full cohit dimension is \(2856\), not \(3053\). The dependent generic and rank-raising dimensions change accordingly. The degree-\(22\) calculation remains unaffected; the numerical comparison is recorded in Remark~\ref{rem:correction}.

The cobordism calculation adds a concrete geometric result. Since \(49\neq2^j-1\), the quotient \((Q\Nfrak_*)_{49}\) is one-dimensional. We compute the full dimension \(\dim_{\F_2}\Nfrak_{49}=5692\), and we identify an explicit representative of its unique nonzero indecomposable class. Let
\[
H_{2,48}=\left\{([x],[y])\in\RP^2\times\RP^{48}\ 
\middle|\ x_0y_0+x_1y_1+x_2y_2=0\right\}.
\]
The stable tangent bundle calculation gives
\[
\left\langle s_{49}(TH_{2,48}),[H_{2,48}]\right\rangle
=\binom{50}{2}\equiv1\pmod2,
\]
so \([H_{2,48}]\) is indecomposable and generates \((Q\Nfrak_*)_{49}\). This gives a genuine geometric realization of the cobordism generator. It does not, by itself, geometrically realize \([\zeta]\). Indeed, the fundamental class obtained from the two tautological line bundles maps to
\[
\beta_1\otimes\beta_{48}+\beta_2\otimes\beta_{47}
\in H_{49}(BV_2;\F_2),
\]
and its \(\Sq_*^2\)-image is nonzero. The tautological class therefore lies outside \(P_{\A}H_{49}(BV_5;\F_2)\) and cannot be the functional dual of \([\zeta]\). The geometric theorem and the obstruction theorem together give the precise relation between the two one-dimensional constructions.

The paper is organized as follows. Section~\ref{sec:preliminaries} fixes the Steenrod, weight, Kameko, and transfer conventions. Section~\ref{sec:degree49} reduces the generic family to degree \(49\), constructs the quotient basis, identifies the Kameko kernel, and records the corrected dimensions. Section~\ref{sec:transfer} determines the invariant line and evaluates the fifth transfer. Section~\ref{sec:cobordism} computes the degree-\(49\) cobordism group, proves that \(H_{2,48}\) is an indecomposable generator, and establishes the obstruction to identifying its tautological homology class with the dual of \([\zeta]\). Finally, Appendix~\ref{app:data} records the complete degree-\(49\) basis and the support of \(\zeta\).

\section{Algebraic preliminaries}\label{sec:preliminaries}

This section fixes the algebraic structures used in the computation.  It first describes the Steenrod action, the cohit quotient, and the weight filtration; it then recalls Kameko's squaring operation and the numerical criterion controlling its isomorphism range; finally, it identifies the dual fixed space that forms the source of Singer's cohomological transfer.

\subsection{The Steenrod action and the cohit quotient}

For a nonnegative integer \(a\), the action on one variable is
\[
\Sq^j(x_i^a)=\binom{a}{j}x_i^{a+j},
\]
where the binomial coefficient is reduced modulo \(2\).  Hence, for a monomial \(x^A=x_1^{a_1}\cdots x_s^{a_s}\), the Cartan formula gives
\[
\Sq^j(x^A)
 =\sum_{j_1+\cdots+j_s=j}
   \prod_{i=1}^s\binom{a_i}{j_i}
   x_1^{a_1+j_1}\cdots x_s^{a_s+j_s}.
\]
A homogeneous polynomial is \emph{hit} if it belongs to \(\A^+\Ps_s\).  The degree-\(n\) cohit space is therefore
\[
(\QP_s)_n=(\Ps_s)_n/(\A^+\Ps_s)_n, \ \text{where} (\A^+\Ps_s)_n:= \A^+\Ps_s\cap (\Ps_s)_n.
\]
A monomial is called admissible when its class is not reducible, with respect to the standard weight-lexicographic order, to a sum of smaller monomials modulo hits.  The classes of the admissible degree-\(n\) monomials form a basis of \((\QP_s)_n\).

Write the dyadic expansion of \(a\) as \(a=\sum_{t\geq0}\alpha_t(a)2^t\), where \(\alpha_t(a)\in\{0,1\}\).  The weight vector of \(x^A\) is
\[
\wt(x^A)=(\wt_1(x^A),\wt_2(x^A),\ldots),
\qquad
\wt_j(x^A)=\sum_{i=1}^s\alpha_{j-1}(a_i).
\]
Its weighted degree is \(\sum_{j\geq1}2^{j-1}\wt_j(x^A)=|x^A|\).  Weight vectors are ordered lexicographically from the left.  For a fixed weight \(\wt\), let \(\Ps_s(\wt)\) be spanned by all monomials of degree \(|\wt|\) whose weights do not exceed \(\wt\), and let \(\Ps_s^-(\wt)\) be spanned by those of smaller weight.  The associated weight quotient is
\[
(\QP_s)(\wt)
 =\frac{\Ps_s(\wt)}{(\A^+\Ps_s\cap\Ps_s(\wt))+\Ps_s^-(\wt)}.
\]
The weight filtration is preserved by linear substitutions, so every \((\QP_s)(\wt)\) is a \(\GL(s,\F_2)\)-module.

A spike is a monomial whose nonzero exponents have the form \(2^{d_i}-1\).  When the exponents are arranged with \(d_1>\cdots>d_{r-1}\geq d_r>0\), the resulting monomial is the minimal spike in its degree.  Singer's criterion states that if \(z\) is the minimal spike of degree \(n\) and \(x\) is a degree-\(n\) monomial with \(\wt(x)<\wt(z)\), then \(x\) is hit \cite{Singer1991}.  In degree \(49\), the minimal spike
\[
z=x_1^{31}x_2^{15}x_3^3
\]
has weight \((3,3,2,2,1)\).  Consequently, every admissible degree-\(49\) monomial has first weight coordinate at least \(3\).  Since \(49\) is odd, that coordinate is odd, and hence it is either \(3\) or \(5\).

\subsection{Kameko's squaring operation}

Define a linear map \(\psi_s:\Ps_s\to\Ps_s\) on monomials by
\[
\psi_s(x_1^{a_1}\cdots x_s^{a_s})=
\begin{cases}
 x_1^{(a_1-1)/2}\cdots x_s^{(a_s-1)/2},
   &\text{if every \(a_i\) is odd},\\
 0,&\text{otherwise}.
\end{cases}
\]
Kameko proved that this map induces an epimorphism
\[
(\widetilde{\Sq}^{\,0}_*)_{(s;n)}:(\QP_s)_n
 \longrightarrow (\QP_s)_{(n-s)/2}
\]
when \(n\equiv s\pmod2\) \cite{Kameko1990}.  It is convenient to write the source degree as \(2m+s\), in which case the target is degree \(m\).

For a nonnegative integer \(m\), let \(\mu(m)\) be the least number of summands of the form \(2^u-1\) required to express \(m\).  Equivalently,
\[
\mu(m)=\min\{r\geq0\mid \alphaTwo(m+r)\leq r\},
\]
where \(\alphaTwo(q)\) denotes the number of ones in the binary expansion of \(q\).  Kameko's isomorphism theorem states that
\[
(\widetilde{\Sq}^{\,0}_*)_{(s;2m+s)}:(\QP_s)_{2m+s}
 \longrightarrow(\QP_s)_m
\]
is an isomorphism whenever \(\mu(2m+s)=s\).  The numerical formulation used here is standard; see Kameko \cite{Kameko1990} and Wood \cite{Wood1989}.

The first weight coordinate has a direct interpretation under this map.  For a monomial in \(s\) variables, \(\wt_1=s\) precisely when every exponent is odd.  Thus \(\psi_s\) vanishes on every weight summand with \(\wt_1<s\), while it divides the exponents in a weight-\(s\) summand according to the displayed formula.  This observation will identify the complete kernel in degree \(49\).

\subsection{General linear invariants and the Singer transfer}

The group \(\GL(s,\F_2)\) acts on \(\Ps_s=H^*(BV_s;\F_2)\) by linear substitutions.  This action commutes with every Steenrod square and therefore descends to \(\QP_s\).  Let \(\rho_i\) for \(1\leq i<s\) interchange \(x_i\) and \(x_{i+1}\), and let \(\rho_s\) send \(x_1\) to \(x_1+x_2\) while fixing the remaining variables.  These substitutions generate \(\GL(s,\F_2)\).  A class \([f]\in\QP_s\) is fixed exactly when
\[
\rho_i(f)+f\in\A^+\Ps_s
\qquad(1\leq i\leq s).
\]
This criterion is the one used in the exact invariant calculation below.

Let \(P_{\A}H_n(BV_s;\F_2)\) denote the subspace annihilated by all positive Steenrod operations in homology.  The evaluation pairing gives a natural duality
\[
(\QP_s)_n^*\cong P_{\A}H_n(BV_s;\F_2).
\]
Passing to invariants on cohomology and coinvariants on homology yields
\[
\bigl((\QP_s)_n^{\GL(s,\F_2)}\bigr)^*
 \cong
\F_2\otimes_{\GL(s,\F_2)}P_{\A}H_n(BV_s;\F_2).
\]
Singer's cohomological transfer is the homomorphism
\[
\varphi_s^{\A}:
\bigl((\QP_s)_n^{\GL(s,\F_2)}\bigr)^*
\longrightarrow
\Ext_{\A}^{s,s+n}(\F_2,\F_2)
\]
dual to the homological transfer introduced in \cite{Singer1989}.  The direct sum over all ranks and internal degrees is multiplicative.  This multiplicativity will be used only after the source and target in the relevant bidegrees have been determined independently.

\section{The degree-\texorpdfstring{\(49\)}{49} computation and the Kameko kernel}\label{sec:degree49}

This section reduces the generic family \(N_d=27\cdot2^d-5\) to degree \(49\), describes the exact linear algebra used to compute the quotient, and proves that the \(1891\)-dimensional weight-\((3,3,2,2,1)\) summand is exactly the kernel of Kameko's map.  The complete monomial data are deferred to Appendix~\ref{app:data}, while every dimension needed in the proof is stated here.

\subsection{Reduction of the generic family}

\begin{lemma}\label{lem:mu}
For \(N_d=27\cdot2^d-5\), one has \(\mu(N_1)=3\) and \(\mu(N_d)=5\) for every \(d\geq2\).
\end{lemma}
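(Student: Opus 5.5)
We compute $\mu$ directly from the characterization
\[
\mu(m)=\min\{r\geq0\mid \alphaTwo(m+r)\leq r\}
\]
recalled in Section~\ref{sec:preliminaries}. For each $d$ we exhibit the first admissible $r$ and check that every smaller $r$ fails.

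\emph{The case $d=1$.} Here $N_1=49=110001_2$. For $r=0,1,2$ we get $49$, $50=110010_2$ and $51=110011_2$, with binary weights $3$, $3$ and $4$. Each exceeds $r$, so $r\le 2$ fails. For $r=3$ we get $52=110100_2$, of weight $3\le 3$. Hence $\mu(49)=3$, which matches the decomposition $49=31+15+3$ into the minimal spike $x_1^{31}x_2^{15}x_3^3$ used above.

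\emph{The case $d\geq2$.} Write
\[
N_d=27\cdot2^d-5=(27\cdot2^d-8)+3=(27\cdot2^{d-3}-1)\cdot 8+3 .
\]
To avoid fractional exponents for small $d$, it is cleaner to use
\[
N_d+r=27\cdot2^d-(5-r).
\]
For $0\le r\le 5$ the subtracted quantity $5-r$ lies in $\{0,\dots,5\}$, which is less than $2^d$ once $d\ge3$, so we treat $d\ge 3$ uniformly and check $d=2$ by hand.

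For $d=2$, $N_2=103=1100111_2$. The values $N_2+r$ for $r=0,\dots,5$ are $103,104,105,106,107,108$, with weights $5,3,4,4,5,4$. The condition $\alphaTwo(103+r)\le r$ fails for $r=0,1,2,3$ (since $3>1$, $4>2$, $4>3$) and for $r=4$ (since $5>4$). It holds at $r=5$, since $\alphaTwo(108)=\alphaTwo(1101100_2)=4\le5$. Thus $\mu(N_2)=5$.

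For $d\ge3$, use $27=11011_2$. Then $27\cdot2^d-1$ is $11010$ followed by $d$ ones, of weight $3+d$. More generally, for $1\le k\le 5<2^d$,
\[
27\cdot2^d-k=(27\cdot2^d-2^d)+(2^d-k)=26\cdot2^d+(2^d-k),
\]
and $26=11010_2$ contributes weight $3$. Hence
\[
\alphaTwo(N_d+r)=3+\alphaTwo(2^d-(5-r))\quad (r\le4),\qquad \alphaTwo(N_d+5)=\alphaTwo(27)=4 .
\]
Since $\alphaTwo(2^d-j)=d-\alphaTwo(j-1)$ for $1\le j\le 2^d$, the weights for $r=0,\dots,4$ are $3+d-2$, $3+d-2$, $3+d-1$, $3+d-1$, $3+d$. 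Each is at least $d+1\ge4$ and strictly exceeds $r$: for $r\le3$ the weight is $\ge d+1\ge4>r$, and for $r=4$ it is $3+d\ge6>4$. At $r=5$ the weight is $4\le5$, so $\mu(N_d)=5$.

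The only delicate point is the bookkeeping of binary weights near $27\cdot 2^d$, via the identity $\alphaTwo(2^d-j)=d-\alphaTwo(j-1)$, together with the separate hand check of the small case $d=2$ where $2^d\le 5$. The rest is arithmetic.
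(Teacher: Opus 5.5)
Your proof is correct and follows the paper's route: you check $d=1$ and $d=2$ by hand, and for $d\geq 3$ you split $27\cdot 2^d-k=26\cdot 2^d+(2^d-k)$ into disjoint binary blocks, using the complement identity $\alpha(2^d-j)=d-\alpha(j-1)$ where the paper lists the individual values. One harmless slip: at $r=0$ the identity gives $\alpha(2^d-5)=d-\alpha(4)=d-1$, so $\alpha(N_d)=d+2$ rather than your $d+1$; this changes nothing, since $r=0$ fails for any positive weight.
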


\begin{proof}
The identity \(\mu(n)=\min\{r\mid\alphaTwo(n+r)\leq r\}\) reduces the assertion to binary digit counts.  For \(d=1\),
\[
N_1=49,
\qquad
\alphaTwo(50)=3>1,
\qquad
\alphaTwo(51)=4>2,
\qquad
\alphaTwo(52)=3\leq3.
\]
Thus \(\mu(N_1)=3\).

For \(d=2\), one has \(N_2=103\), and direct calculation gives
\[
\alphaTwo(104)=3>1,
\quad
\alphaTwo(105)=4>2,
\quad
\alphaTwo(106)=4>3,
\quad
\alphaTwo(107)=5>4,
\quad
\alphaTwo(108)=4\leq5.
\]
Hence \(\mu(N_2)=5\).

Now suppose \(d\geq3\).  Since \(N_d=26\cdot2^d+(2^d-5)\), the two displayed summands occupy disjoint binary positions after adding \(r\in\{1,2,3,4\}\).  As \(26=(11010)_2\), one has \(\alphaTwo(26)=3\).  Moreover,
\[
\begin{aligned}
\alphaTwo(2^d-4)&=d-2,\\
\alphaTwo(2^d-3)&=d-1,\\
\alphaTwo(2^d-2)&=d-1,\\
\alphaTwo(2^d-1)&=d.
\end{aligned}
\]
It follows that
\[
\begin{aligned}
\alphaTwo(N_d+1)&=d+1>1,\\
\alphaTwo(N_d+2)&=d+2>2,\\
\alphaTwo(N_d+3)&=d+2>3,\\
\alphaTwo(N_d+4)&=d+3>4.
\end{aligned}
\]
Finally, \(N_d+5=27\cdot2^d\), so \(\alphaTwo(N_d+5)=\alphaTwo(27)=4\leq5\).  Therefore \(\mu(N_d)=5\) for all \(d\geq3\), completing the proof.
\end{proof}

\begin{corollary}\label{cor:kameko-periodicity}
For every \(d\geq2\), the iterated Kameko map induces a \(\GL(5,\F_2)\)-equivariant isomorphism
\[
(\widetilde{\Sq}^{\,0}_*)^{d-1}:(\QP_5)_{N_d}
\xrightarrow{\cong}(\QP_5)_{49}.
\]
\end{corollary}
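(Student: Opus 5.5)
The plan is to obtain the isomorphism one Kameko step at a time from Kameko's isomorphism theorem, using Lemma~\ref{lem:mu}, and then to check that the composite respects the group action. First I record the arithmetic. The recurrence \(N_k=2N_{k-1}+5\) holds because \(2(27\cdot2^{k-1}-5)+5=27\cdot2^k-5\). So each \(N_k\) has the form \(2m+s\) with \(s=5\) and \(m=N_{k-1}\). In particular \(N_k\equiv 5\pmod 2\), so Kameko's map
\[
(\widetilde{\Sq}^{\,0}_*)_{(5;N_k)}:(\QP_5)_{N_k}\longrightarrow(\QP_5)_{N_{k-1}}
\]
is defined and surjective.

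Next, for every \(k\geq2\), Lemma~\ref{lem:mu} gives \(\mu(N_k)=5=s\). Kameko's isomorphism theorem, in the numerical form recalled in the preliminaries, then makes each single step \((\QP_5)_{N_k}\to(\QP_5)_{N_{k-1}}\) an isomorphism. For a fixed \(d\geq2\), I compose the steps for \(k=d,d-1,\ldots,2\). This is \(d-1\) maps, ending in degree \(N_1=49\), so the composite is the iterated map \((\widetilde{\Sq}^{\,0}_*)^{d-1}\) and is an isomorphism. The step from \(49\) to \(22\) is never used, which is consistent with \(\mu(49)=3\neq5\).

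It remains to prove \(\GL(5,\F_2)\)-equivariance of a single Kameko map; equivariance of the composite follows. Here I would not argue monomial by monomial, since \(\psi_5\) is not equivariant on \(\Ps_5\) itself. Instead I use Kameko's description of \(\widetilde{\Sq}^{\,0}_*\) as the dual of the homology map induced by the total \(\Sq^0\)-type operation, or equivalently the map dual to
\[
f\mapsto x_1\cdots x_5\, f^2
\]
on \(\QP_5\). Squaring commutes with linear substitutions. For \(g\in\GL(5,\F_2)\), the product \(x_1\cdots x_5\) is sent to \(\det(g)\,x_1\cdots x_5=x_1\cdots x_5\) modulo hits; this holds because the product of all linear forms in a basis is congruent modulo \(\A^+\Ps_5\) to the determinant times the top Dickson-type monomial, as in Kameko's thesis. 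So the dual map \((\QP_5)_m\to(\QP_5)_{2m+5}\), \([f]\mapsto[x_1\cdots x_5f^2]\), is equivariant. The Kameko map is a left inverse of this map in the sense that \(\psi_5(x_1\cdots x_5f^2)=f\). In the isomorphism range this left inverse is therefore the inverse of an equivariant isomorphism, hence equivariant.

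I expect the equivariance to be the main obstacle, specifically the claim that \(g(x_1\cdots x_5)\,g(f)^2\equiv x_1\cdots x_5\,g(f)^2\) modulo hits. The cleanest route is to cite Kameko's original proof (or Boardman's), where \(\widetilde{\Sq}^{\,0}_*\) is shown to commute with the \(\GL\)-action and is identified with the dual of the natural \(\Sq^0\) on \(P_{\A}H_*(BV_5)\). That homology operation is defined functorially, so it commutes with every map \(BV_5\to BV_5\).
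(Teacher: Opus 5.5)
The reduction to single Kameko steps is exactly the paper's argument and is correct: the recurrence \(N_k=2N_{k-1}+5\), Lemma~\ref{lem:mu} for \(k\geq2\), Kameko's theorem at each step, and composing the \(d-1\) maps down to \(N_1=49\).

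The gap is in the equivariance step. Your key claim is that \(g(x_1\cdots x_5)\equiv x_1\cdots x_5\) modulo \(\A^+\Ps_5\) for \(g\in\GL(5,\F_2)\), and this is false. For example, \(\rho_5(x_1\cdots x_5)=x_1x_2x_3x_4x_5+x_2^2x_3x_4x_5\), and \(x_2^2x_3x_4x_5\) is not hit. To see this, consider the degree-\(5\) hit generators \(\Sq^1(m)\) and \(\Sq^2(m)\), with \(m\) a monomial (\(\Sq^4\) vanishes on degree \(1\)). The monomial \(x_2^2x_3x_4x_5\) occurs only in \(\Sq^1(x_2x_3x_4x_5)\). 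That generator also contains \(x_2x_3^2x_4x_5\), which occurs in no other generator.

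Your fallback is also overstated. Kameko's \(\Sq^0\) does not commute with every self-map of \(BV_5\), only with invertible ones: for singular \(g\), the coefficient of \(x_1\cdots x_5\) in \(g(x_1\cdots x_5)\) is \(\det g=0\). Finally, you never check that \([f]\mapsto[x_1\cdots x_5f^2]\) is well defined on \(\QP_5\). In the isomorphism range it is, since \(\psi_5(x_1\cdots x_5f^2)=f\) and the Kameko map is injective.

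The repair is short and makes the detour through this up-map unnecessary. In particular, your remark that \(\psi_5\) is not equivariant on \(\Ps_5\) is mistaken. Put \(X=x_1\cdots x_5\). For \(g\in\GL(5,\F_2)\), the coefficient of \(X\) in \(g(X)=\prod_i g(x_i)\) is the permanent of the matrix of \(g\), which is \(\det g=1\) modulo \(2\). Every other monomial of \(g(X)\) has a zero exponent.

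Now take a monomial \(x\).
If \(x=Xy^2\), then \(g(x)=Xg(y)^2+Rg(y)^2\), where every monomial of \(Rg(y)^2\) has an even exponent. Hence \(\psi_5(g(x))=g(y)=g(\psi_5(x))\).
Otherwise \(x=x_Iy^2\), where \(x_I\) is the product of the fewer than five variables with odd exponent. Then \(g(x_I)\) has degree less than \(5\), so every monomial of \(g(x)\) has fewer than five odd exponents. Hence \(\psi_5(g(x))=0=g(\psi_5(x))\).

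So \(\psi_5\) commutes with \(\GL(5,\F_2)\) already on \(\Ps_5\). This is what the paper's one-line naturality assertion rests on, and it makes the Kameko maps equivariant in every degree.

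The congruence you isolated, \(g(X)g(f)^2\equiv Xg(f)^2\), is also true, but for a different reason. The difference consists of monomials killed by \(\psi_5\), and these are hit in degree \(N_k\) because the Kameko map is injective there.
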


\begin{proof}
The recurrence \(N_d=2N_{d-1}+5\) puts each step in the domain of Kameko's map.  Lemma~\ref{lem:mu} gives \(\mu(N_j)=5\) for every \(j\geq2\), so Kameko's theorem makes each map from degree \(N_j\) to degree \(N_{j-1}\) an isomorphism.  The definition of \(\psi_5\) is natural under linear substitutions modulo hits, and hence the induced maps are \(\GL(5,\F_2)\)-equivariant.
\end{proof}

\subsection{Exact construction of the quotient basis}

Let \(\mathcal M_{49}\) be the ordered set of all monomials of degree \(49\) in five variables.  Its cardinality is
\[
|\mathcal M_{49}|=\binom{49+5-1}{5-1}=\binom{53}{4}=292825.
\]
The Steenrod algebra is generated by \(\Sq^{2^j}\), so the degree-\(49\) hit subspace is spanned by the images
\[
\Sq^{2^j}(\Ps_5)_{49-2^j}
\qquad
(2^j\leq49).
\]
The implementations in \texttt{SageMath} and \texttt{OSCAR} expand each image by the Cartan formula, use Lucas congruences for its coefficients, write the resulting polynomial as a sparse vector indexed by \(\mathcal M_{49}\), and perform exact online elimination over \(\F_2\).  No floating-point or probabilistic step enters the calculation.  A pivot monomial is hit; each nonpivot monomial survives as an admissible quotient representative.

\begin{proposition}\label{prop:raw-dimensions}
The exact row reduction processes \(927041\) nonzero sparse columns and has rank \(289969\).  Consequently,
\[
\dim_{\F_2}(\QP_5)_{49}=292825-289969=2856.
\]
The surviving basis decomposes into the following six weight summands:
\[
\begin{array}{c|r}
\text{weight}&\text{dimension}\\ \hline
(3,3,2,2,1)&1891\\
(5,2,2,2,1)&280\\
(5,2,4,1,1)&25\\
(5,2,4,3)&5\\
(5,4,3,1,1)&480\\
(5,4,3,3)&175.
\end{array}
\]
In particular, the six dimensions sum to \(2856\), and the five summands with first weight coordinate \(5\) have total dimension \(965\).  Within the weight-\((3,3,2,2,1)\) block, exactly \(910\) basis monomials have at least one zero exponent, while exactly \(981\) have all five exponents positive.
\end{proposition}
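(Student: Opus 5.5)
The proposition is a certified finite computation, so the proof will consist of a precise description of the linear algebra together with independent consistency checks that pin down the stated numbers. First I will enumerate $\mathcal M_{49}$ (of size $\binom{53}{4}=292825$) in the weight-lexicographic order of Section~\ref{sec:preliminaries}. Then, for each $j$ with $2^j\le 49$ (that is, $j\le 5$) and each monomial $y$ of degree $49-2^j$, I will form $\Sq^{2^j}(y)$ via the Cartan formula with Lucas coefficients. I will record each nonzero result as a sparse $\F_2$-vector and count them; this count is the figure $927041$. Next I will reduce these vectors online, choosing as pivot the largest monomial in the order. Because the order refines the weight filtration, the nonpivot monomials are exactly the admissible ones. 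Their classes then form a basis of $(\QP_5)_{49}$ and are compatible with the weight quotients $(\QP_5)(\wt)$. The rank will come out as $289969$, which yields $2856$.

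For the weight decomposition, I will first restrict the possible weights by Singer's criterion as recalled above. The first coordinate must be $3$ or $5$, and every admissible weight must be at least $(3,3,2,2,1)$. Enumerating the weight vectors of degree $49$ with entries at most $5$ that satisfy these conditions gives a short finite list. For each weight on that list I will count the surviving monomials of that weight. The six nonzero counts are $1891,280,25,5,480,175$. I will check that they sum to $2856$ and that the last five sum to $965$. The split $910/981$ inside the $(3,3,2,2,1)$ block follows by counting exponent supports among the admissible monomials in that block.

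I will not rely on the computation alone; I will certify it in three independent ways. First, the $965$ first-weight-$5$ classes should map under $\psi_5$ onto $(\QP_5)_{22}$. By Kameko's epimorphism together with the known degree-$22$ dimension, this total must equal $\dim(\QP_5)_{22}$, which I will check. Second, the contributions supported on $k<5$ variables can be computed recursively from the rank $\le 4$ cohit dimensions. The standard formula $\sum_k\binom5k\dim(\QP_k^0)_{49}$, where $\QP_k^0$ denotes the span of the classes of monomials with all $k$ exponents positive, should reproduce $910$ from the established lower-rank results. Third, I will run the reduction independently in \texttt{SageMath} and \texttt{OSCAR} and require that the two produce identical pivot sets.

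The main obstacle is the exactness and scale of the elimination: about $9\times 10^5$ sparse vectors must be reduced over roughly $2.9\times 10^5$ coordinates. Fill-in could make this infeasible, and a single coefficient error would silently change the rank. To control this, I will first try splitting the computation by weight, since hits of weight below $\wt$ are absorbed into $\Ps_5^-(\wt)$. This reduces the problem to several smaller blocks processed in decreasing weight order. I will then check the resulting block ranks against the global rank to confirm that the reduction is correct.
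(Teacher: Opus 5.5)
Your proposal is correct and follows the paper's own argument: an exact online elimination over $\F_2$ of the $\Sq^{2^j}$-images of all monomials of degree $49-2^j$, with the nonpivot monomials read off as the admissible basis and then sorted by weight. The paper also itemizes the figure $927041$: it is the sum of the source counts $\binom{53-2^j}{4}$, minus the $20475$ vanishing $\Sq^1$-images (sources with all exponents even) and the $25$ vanishing $\Sq^{16}$-images, minus all $\Sq^{32}$-images, which vanish by instability since their sources have degree $17<32$. Your first cross-check needs one more ingredient: surjectivity of Kameko's map only gives $965\geq\dim_{\F_2}(\QP_5)_{22}$, and the equality you assert also requires Kameko's result that admissibility of $x_1\cdots x_5y^2$ forces admissibility of $y$, so that $\psi_5$ injects the first-weight-$5$ admissibles into the degree-$22$ admissibles.
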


\begin{proof}
For each \(j\), let
\[
g_j=\dim_{\F_2}(\Ps_5)_{49-2^j}=\binom{49-2^j+4}{4}.
\]
Thus \(g_j\) is the number of degree-\(49-2^j\) monomials whose \(\Sq^{2^j}\)-images are generated before zero columns are removed. The six source counts are
\[
\begin{array}{c|rrrrrr}
\text{operation}&\Sq^1&\Sq^2&\Sq^4&\Sq^8&\Sq^{16}&\Sq^{32}\\ \hline
\text{source monomials}&270725&249900&211876&148995&66045&5985.
\end{array}
\]
Their sum is \(953526\). The cumulative log shows that the numbers of nonzero sparse columns submitted to elimination at the six stages are
\[
\begin{array}{c|rrrrrr}
\text{operation}&\Sq^1&\Sq^2&\Sq^4&\Sq^8&\Sq^{16}&\Sq^{32}\\ \hline
\text{nonzero columns}&250250&249900&211876&148995&66020&0.
\end{array}
\]
The deficits \(20475\) and \(25\) in the \(\Sq^1\)- and \(\Sq^{16}\)-stages are zero images. Every \(\Sq^{32}\)-image vanishes by instability because its source has degree \(17<32\). Consequently, the exact number of nonzero columns processed by online elimination is
\[
250250+249900+211876+148995+66020=927041.
\]
Elimination produces \(289969\) pivots, so the hit subspace has rank \(289969\). Since the rows are indexed by all \(292825\) degree-\(49\) monomials, the quotient dimension is
\[
292825-289969=2856.
\]
Sorting the nonpivot monomials by binary weight gives the six displayed blocks. Their dimensions are the exact lengths of the basis lists in Appendix~\ref{app:data}; in particular,
\[
1891+280+25+5+480+175=2856
\]
and
\[
280+25+5+480+175=965.
\]
Inspection of the exponent vectors in the first block gives \(910\) vectors with at least one zero coordinate and \(981\) vectors with all coordinates positive. Their sum is \(1891\).
\end{proof}

\subsection{Identification of the kernel}

\begin{theorem}\label{thm:kameko-kernel}
The degree-\(49\) Kameko map
\[
(\widetilde{\Sq}^{\,0}_*)_{(5;49)}:(\QP_5)_{49}
\longrightarrow(\QP_5)_{22}
\]
is surjective and admits the direct-sum description
\[
(\QP_5)_{49}
\cong
(\QP_5)(3,3,2,2,1)\oplus(\QP_5)_{22}.
\]
Its kernel is exactly the first summand, and therefore
\[
\dim_{\F_2}\Ker\bigl((\widetilde{\Sq}^{\,0}_*)_{(5;49)}\bigr)
=
\dim_{\F_2}(\QP_5)(3,3,2,2,1)
=1891.
\]
The target has dimension \(965\), and the restriction of Kameko's map to the direct sum of the five weight summands with first coordinate \(5\) is an isomorphism onto \((\QP_5)_{22}\).
\end{theorem}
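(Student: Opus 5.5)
The plan is to combine Kameko's surjectivity theorem with the weight description of the basis from Proposition~\ref{prop:raw-dimensions}. Surjectivity of \((\widetilde{\Sq}^{\,0}_*)_{(5;49)}\) is Kameko's general epimorphism result, since \(49\equiv5\pmod 2\). For the kernel, I would use the observation at the end of the Kameko subsection. A monomial is killed by \(\psi_5\) exactly when some exponent is even, that is, when \(\wt_1<5\). Every admissible degree-\(49\) monomial has \(\wt_1\in\{3,5\}\), and the only weight with \(\wt_1=3\) is \((3,3,2,2,1)\). Hence \(\psi_5\) sends each of the \(1891\) admissible monomials of that block to \(0\). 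The block therefore lies in the kernel, provided I check that the span of these basis classes in \((\QP_5)_{49}\) is well defined as a subspace. This holds because the full admissible basis is a basis, so the span of a subset of it is a subspace.

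Next I would take the \(965\) admissible monomials with \(\wt_1=5\). For each such monomial \(x^A\), \(\psi_5(x^A)\) is a monomial of degree \(22\). The key step is to show that these \(965\) images are linearly independent in \((\QP_5)_{22}\). I would combine this with the known value \(\dim(\QP_5)_{22}=965\), which is the degree-\(22\) result recorded as unaffected in Remark~\ref{rem:correction} and Sum's/Tin's earlier computations. Alternatively, I would recompute \(\dim(\QP_5)_{22}\) by the same exact sparse elimination, which is cheap in degree \(22\). Surjectivity together with the dimension count then does most of the work. The kernel has dimension \(2856-965=1891\), and it contains the \(1891\)-dimensional span of the \((3,3,2,2,1)\) block, so the two coincide. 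The restriction of the map to the span of the \(\wt_1=5\) blocks then has trivial kernel, because that span meets the \((3,3,2,2,1)\) span trivially. It is therefore injective, and since the dimensions agree it is an isomorphism onto \((\QP_5)_{22}\). This gives the direct-sum decomposition.

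Strictly, \((\QP_5)(3,3,2,2,1)\) is a weight quotient rather than a subspace. I would identify it with the span of the admissible monomials of that weight. This works because \(\wt=(3,3,2,2,1)\) is the minimal weight: by Singer's criterion, \(\Ps_5^-(\wt)\) consists of hit elements, so the weight quotient is a subspace of \((\QP_5)_{49}\) with basis given by those admissibles.

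I expect the main obstacle to be the dimension input \(\dim(\QP_5)_{22}=965\). The whole argument hinges on it. If I cannot simply cite it, I would verify it directly by exact elimination in degree \(22\), over \(\binom{26}{4}=14950\) monomials. I would also check explicitly that the \(\psi_5\)-images of the \(965\) admissibles form a basis there, by reducing each of them modulo the degree-\(22\) hit space.
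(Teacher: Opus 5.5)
Your proposal is correct and is essentially the paper's proof: Kameko's epimorphism gives surjectivity, and $\psi_5$ kills every monomial with $\wt_1=3$, which puts the $1891$-dimensional $(3,3,2,2,1)$ block in the kernel; the input $\dim(\QP_5)_{22}=965$ and rank--nullity then force equality, and the complementary $965$-dimensional span maps isomorphically (you argue via injectivity, the paper via surjectivity, and these are equivalent here). Your separate check that the $965$ images are linearly independent is redundant once the dimension count is in place, and your Singer-criterion remark, which identifies the minimal-weight quotient with a subspace of $(\QP_5)_{49}$, makes explicit a point the paper leaves implicit.
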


\begin{proof}
Kameko's construction gives surjectivity.  Every monomial in the weight-\((3,3,2,2,1)\) block has precisely three odd exponents because its first weight coordinate is \(3\).  Hence \(\psi_5\) sends every such monomial to zero, and therefore
\[
(\QP_5)(3,3,2,2,1)
\subseteq
\Ker\bigl((\widetilde{\Sq}^{\,0}_*)_{(5;49)}\bigr).
\]
Every monomial in each of the other five blocks has first weight coordinate \(5\), so all five exponents are odd.  The direct sum of these blocks has dimension \(965\) by Proposition~\ref{prop:raw-dimensions}.  The independent degree-\(22\) calculation gives \(\dim_{\F_2}(\QP_5)_{22}=965\) (see \cite{Phuc2021}).  Since the Kameko map is surjective, its rank is \(965\).  The displayed \(1891\)-dimensional subspace already lies in its kernel, while rank-nullity gives
\[
\dim\Ker=2856-965=1891.
\]
The inclusion is therefore an equality.  The complementary \(965\)-dimensional sum maps surjectively between vector spaces of equal finite dimension and is consequently an isomorphism.
\end{proof}

\begin{corollary}\label{cor:generic-dim}
For every \(d\geq1\),
\[
\dim_{\F_2}(\QP_5)_{N_d}=2856.
\]
For \(d=1\), the kernel of Kameko's map has dimension \(1891\); for \(d\geq2\), Corollary~\ref{cor:kameko-periodicity} transports the complete degree-\(49\) basis and all of its invariant-theoretic information to degree \(N_d\).
\end{corollary}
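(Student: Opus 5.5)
The plan is to treat the cases \(d=1\) and \(d\geq2\) separately, combining Proposition~\ref{prop:raw-dimensions}, Theorem~\ref{thm:kameko-kernel}, and Corollary~\ref{cor:kameko-periodicity}. No new computation should be needed: the corollary is bookkeeping that transports the degree-\(49\) data along the Kameko tower.

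For \(d=1\) we have \(N_1=49\). Proposition~\ref{prop:raw-dimensions} gives \(\dim_{\F_2}(\QP_5)_{49}=2856\) directly, from the exact rank \(289969\) of the hit matrix inside the \(292825\)-dimensional monomial space. The kernel statement is exactly the conclusion of Theorem~\ref{thm:kameko-kernel}. There, surjectivity of Kameko's map onto the \(965\)-dimensional space \((\QP_5)_{22}\), together with rank–nullity, forces \(\dim\Ker=2856-965=1891\). This number coincides with the dimension of the weight-\((3,3,2,2,1)\) summand.

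For \(d\geq2\), I would invoke Corollary~\ref{cor:kameko-periodicity}. By Lemma~\ref{lem:mu}, \(\mu(N_j)=5\) for every \(j\geq2\), and \(N_j=2N_{j-1}+5\). Hence each single Kameko step \((\QP_5)_{N_j}\to(\QP_5)_{N_{j-1}}\) with \(2\le j\le d\) is an isomorphism by Kameko's theorem. The composite \((\widetilde{\Sq}^{\,0}_*)^{d-1}\) is therefore a linear isomorphism onto \((\QP_5)_{49}\), so \(\dim(\QP_5)_{N_d}=2856\).

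The phrase \emph{all of its invariant-theoretic information} needs one extra check: the isomorphism must be \(\GL(5,\F_2)\)-equivariant. I would verify this on the generators \(\rho_1,\dots,\rho_5\) of \(\GL(5,\F_2)\). The map \(\psi_5\) commutes with linear substitutions up to hit elements, as Kameko shows. So fixed subspaces and weight-compatible bases in degree \(49\) correspond bijectively to those in degree \(N_d\), using the explicit inverse \(f\mapsto x_1\cdots x_5 f^2\). This equivariance is the only step that is not pure arithmetic, but it is already asserted in Corollary~\ref{cor:kameko-periodicity}, so it can simply be cited there.

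The main point of care is the count \(d-1\) of iterations. The tower must stop at degree \(49\) and not continue to degree \(22\), because \(\mu(49)=3\neq5\) and the last step has a nontrivial kernel.
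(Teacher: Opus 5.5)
Your proposal is correct and follows the paper's own route: the case \(d=1\) is read off from Proposition~\ref{prop:raw-dimensions} (dimension \(2856\)) and Theorem~\ref{thm:kameko-kernel} (kernel dimension \(2856-965=1891\)). The case \(d\geq2\) is the \(\GL(5,\F_2)\)-equivariant Kameko isomorphism of Corollary~\ref{cor:kameko-periodicity}, iterated exactly \(d-1\) times so that it stops at degree \(49\). Your remark that \(f\mapsto x_1\cdots x_5f^2\) gives the inverse is also sound: the Kameko map is injective in this range, so \([x_1\cdots x_5f^2]\) is the unique preimage of \([f]\).
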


\begin{remark}[Correction to \cite{Tin2022}]\label{rem:correction}
Theorem~\ref{thm:kameko-kernel} and the preceding basis count give the exact distinctions that are needed to correct the corresponding assertions in \cite{Tin2022}.  The kernel is supported only in weight \((3,3,2,2,1)\), its full dimension is \(1891\), and its positive-variable part has dimension
\[
1891-910=981.
\]
Thus the value \(1178\) asserted in \cite[Theorem~3.3(ii)]{Tin2022} is incorrect.  Moreover,
\[
\dim(\QP_5)_{49}=965+1891=2856,
\]
not \(3053\) as stated in \cite[Corollary~3.4]{Tin2022}.  Kameko periodicity therefore gives \(\dim(\QP_5)_{N_d}=2856\) for every \(d\geq1\), correcting \cite[Theorem~3.6]{Tin2022}.  Applying the rank-raising dimension formula used in \cite[Theorem~3.7]{Tin2022} to the corrected value gives
\[
(2^6-1)\cdot2856=179928
\]
in place of \(192339\); see \cite[Theorem~1.3]{Sum2015}.  The degree-\(22\) assertion in \cite[Theorem~3.2]{Tin2022} remains compatible with the present calculation, but it follows from the previously known equality \(\dim(\QP_5)_{22}=965\) \cite{Phuc2021} together with Kameko's epimorphism.  Further details on the discrepancy are given in \cite[Remark~3.6]{PhucPreprint}.
\end{remark}

\section{The fifth Singer cohomological transfer}\label{sec:transfer}

This section determines the \(\GL(5,\F_2)\)-fixed subspaces in degrees \(22\) and \(49\), propagates the degree-\(49\) fixed line through Kameko periodicity, and compares the resulting one-dimensional transfer source with the known fifth-line \(\Ext\)-groups.  The explicit generator \(\zeta\) is stated intrinsically here and specified by its exact basis support in Appendix~\ref{app:zeta}.

\subsection{The invariant line}

For each of the six weight bases in Proposition~\ref{prop:raw-dimensions}, the matrices of \(\rho_i-I\), \(1\leq i\leq5\), were reduced modulo the hit subspace.  Intersecting their kernels computes the fixed vectors.  The first weight block has a one-dimensional fixed space; every other block has zero fixed space.  On the first block, the intermediate \(\Sigma_5\)-fixed space has dimension \(34\), and the final transvection condition reduces it to one dimension.

\begin{theorem}\label{thm:invariant-line}
One has
\[
(\QP_5)_{22}^{\GL(5,\F_2)}=0
\]
and
\[
(\QP_5)_{49}^{\GL(5,\F_2)}
=(\QP_5)(3,3,2,2,1)^{\GL(5,\F_2)}
=\F_2\{[\zeta]\},
\]
where \(\zeta\in(\Ps_5)_{49}\) is the \(283\)-term polynomial in Appendix~\ref{app:zeta}.  Every term of \(\zeta\) is one of the \(1891\) basis monomials of weight \((3,3,2,2,1)\), and
\[
\rho_i(\zeta)+\zeta\in\A^+\Ps_5
\qquad(1\leq i\leq5).
\]
Moreover, \([\zeta]\neq0\), and hence the displayed line is nonzero.
\end{theorem}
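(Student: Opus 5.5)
The proof will combine the degree-$22$ information with the degree-$49$ basis of Proposition~\ref{prop:raw-dimensions}, using the Kameko decomposition of Theorem~\ref{thm:kameko-kernel}, and will treat the invariant computation one weight block at a time.

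\emph{Degree $22$.} I would cite the known $965$-dimensional basis of $(\QP_5)_{22}$ from \cite{Phuc2021}. Its weight summands are $\GL(5,\F_2)$-modules, so a fixed class has a fixed image in the top nonzero weight quotient. I would first impose the $\Sigma_5$-conditions $\rho_i(f)+f\in\A^+\Ps_5$ for $i\le4$. These are cheap, because $\Sigma_5$ permutes monomials and orbit sums give the fixed space modulo hits. I would then impose the transvection $\rho_5$ on the resulting orbit-sum space and check that nothing survives, working from the highest weight downward.

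\emph{Degree $49$.} The same filtration argument applies. I would order the six weight blocks and note that a fixed class projects to a fixed class in each associated graded quotient $(\QP_5)(\wt)$. For each of the five blocks with $\wt_1=5$, Theorem~\ref{thm:kameko-kernel} identifies the block with the corresponding degree-$22$ weight quotient. This identification is equivariant under $\GL(5,\F_2)$, since $\psi_5$ commutes with linear substitutions modulo hits. So the vanishing in degree $22$ should give zero fixed vectors on those blocks. I would also confirm this directly by computing the kernels of the matrices $\rho_i-I$ on each block.

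\emph{The $(3,3,2,2,1)$ block.} Any fixed class must lie in $\Ker(\widetilde{\Sq}^{\,0}_*)$. Its image is a fixed class in degree $22$, which is zero, so the class lies in the first summand. On the $1891$-dimensional block I would compute the $\Sigma_5$-fixed space, expected to be $34$-dimensional, as orbit sums reduced to admissible form. I would then impose $\rho_5$, which gives a $34$-column linear system whose kernel should be one-dimensional, spanned by $\zeta$. Two things then need checking: that $\rho_i(\zeta)+\zeta$ reduces to zero by the same exact elimination used in Proposition~\ref{prop:raw-dimensions}, and that $\zeta$ is nonzero in the quotient. The latter is automatic, because $\zeta$ is a nonempty sum of distinct nonpivot (admissible) monomials, and these are linearly independent modulo hits.

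\emph{Main obstacle.} The hardest step is the $\rho_5$ reduction on the big block. $\rho_5(x^A)$ expands binomially into many monomials, often of lower weight or non-admissible. Each must be reduced modulo hits and lower weight, using the stored pivot elimination, before the $34$-column system can be assembled. I would handle this with the same sparse exact solver, and cross-check the kernel dimension in two independent implementations.
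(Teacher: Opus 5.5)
The gap is in how you compute $\Sigma_5$-fixed spaces. The claim that ``orbit sums give the fixed space modulo hits'' is false. $(\QP_5)(\wt)$ is a \emph{quotient} of a permutation module, and over $\F_2$ a fixed point of a quotient need not lift to a fixed point (an orbit sum) upstairs. This already fails in rank two. In $(\QP_2)_3$ the class $[x_1^2x_2]$ is $\Sigma_2$-fixed, yet its orbit sum $x_1^2x_2+x_1x_2^2=\Sq^1(x_1x_2)$ is hit. The invariant class $[x_1^3+x_2^3+x_1^2x_2]$ spanning $(\QP_2)_3^{\GL(2,\F_2)}$ (whose dual transfers to $h_0h_2$) is not a combination of orbit sums, so an orbit-sum search would wrongly report zero invariants.

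Used as you describe, your method produces only a subspace of the true $\Sigma_5$-fixed space. It would exhibit $\zeta$, but it would prove neither $(\QP_5)_{22}^{\GL(5,\F_2)}=0$ nor that $[\zeta]$ spans the invariants of the $(3,3,2,2,1)$ block. The $34\to1$ step then gives only a lower bound. The repair is what you already propose for the $\wt_1=5$ blocks, and what the paper does on all six blocks. Write a general element in the admissible basis of the block and compute the simultaneous kernel of the matrices of $\rho_i-I$, reduced modulo hits and lower weight. Do this first for $i\le4$, which gives the $34$-dimensional $\Sigma_5$-fixed space, and then for $i=5$.

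There is also a smaller logical slip. $(\QP_5)_{22}^{\GL(5,\F_2)}=0$ does not imply that each degree-$22$ weight quotient, and hence each $\wt_1=5$ block in degree $49$, has zero invariants, because invariants of a graded piece need not lift. Only the converse implication holds, and the paper uses exactly that converse:
\begin{itemize}
\item it computes fixed-space dimensions $1,0,0,0,0,0$ directly on the six degree-$49$ blocks;
\item it deduces $(\QP_5)_{22}^{\GL(5,\F_2)}=0$ from the five $\wt_1=5$ blocks, using the equivariant Kameko isomorphism and the weight filtration;
\item only then does it confine degree-$49$ invariants to the Kameko kernel.
\end{itemize}
So the paper needs no separate degree-$22$ invariant computation. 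Your route is a direct degree-$22$ computation on the basis of \cite{Phuc2021}, followed by confinement to the kernel. It is also valid once the fixed spaces are computed as kernels, and in it the blockwise vanishing for $\wt_1=5$ is not needed at all. Your remaining steps match the paper, including the $\Sigma_5$-then-transvection strategy and $[\zeta]\neq0$ from its nonpivot support.
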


\begin{proof}
The exact simultaneous-kernel computation gives fixed-space dimensions
\[
1,0,0,0,0,0
\]
on the weight blocks in the order of Proposition~\ref{prop:raw-dimensions}.  The resulting nonzero vector on the first block has the \(283\)-element basis support listed in Appendix~\ref{app:zeta}; comparison with the complete first-block basis verifies that every support monomial is admissible.  Since its coordinate vector is nonzero in a quotient basis, \([\zeta]\neq0\).  The reductions of \(\rho_i(\zeta)+\zeta\) have zero remainder for all five generators \(\rho_i\), which proves \(\GL(5,\F_2)\)-invariance of its class.

By Theorem~\ref{thm:kameko-kernel}, the five weight blocks with first coordinate \(5\) map isomorphically onto \((\QP_5)_{22}\).  Their individual fixed spaces vanish, and Kameko's map is equivariant.  Consequently the target has no fixed vectors.  The only degree-\(49\) fixed line is therefore the line generated by \([\zeta]\).
\end{proof}

\begin{corollary}\label{cor:generic-invariants}
For \(d\geq1\), let \([\zeta_d]\in(\QP_5)_{N_d}\) be the unique class transported to \([\zeta]\) by the iterated Kameko isomorphism, with \(\zeta_1=\zeta\).  Then
\[
(\QP_5)_{N_d}^{\GL(5,\F_2)}=\F_2\{[\zeta_d]\}.
\]
For \(d=0\), the fixed subspace is zero.
\end{corollary}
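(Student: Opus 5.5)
The plan is to split by $d$. The cases $d=0$ and $d=1$ are read off directly from Theorem~\ref{thm:invariant-line}. The cases $d\geq2$ are reduced to $d=1$ through the equivariant Kameko isomorphism of Corollary~\ref{cor:kameko-periodicity}.

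For $d=0$ we have $N_0=22$, and Theorem~\ref{thm:invariant-line} gives $(\QP_5)_{22}^{\GL(5,\F_2)}=0$. For $d=1$ the Kameko iterate is the identity, so $\zeta_1=\zeta$. The same theorem identifies the fixed space as $\F_2\{[\zeta]\}$, with $[\zeta]\neq0$.

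Now let $d\geq2$ and put $\Phi_d=(\widetilde{\Sq}^{\,0}_*)^{d-1}:(\QP_5)_{N_d}\to(\QP_5)_{49}$. By Corollary~\ref{cor:kameko-periodicity}, $\Phi_d$ is a $\GL(5,\F_2)$-equivariant linear isomorphism. This gives two things:
\begin{enumerate}
\item Because $\Phi_d$ is bijective, the class $[\zeta_d]:=\Phi_d^{-1}([\zeta])$ exists and is unique, so the statement is well defined.
\item An equivariant isomorphism restricts to an isomorphism of fixed subspaces: if $\Phi_d(v)$ is fixed, then $\Phi_d(gv)=g\Phi_d(v)=\Phi_d(v)$ for every $g$, and injectivity gives $gv=v$.
\end{enumerate}
Hence
\[
(\QP_5)_{N_d}^{\GL(5,\F_2)}=\Phi_d^{-1}\bigl((\QP_5)_{49}^{\GL(5,\F_2)}\bigr)=\F_2\{[\zeta_d]\},
\]
and $[\zeta_d]\neq0$.

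The only point needing care is the equivariance of each Kameko step. I take it as given by Corollary~\ref{cor:kameko-periodicity}, but a direct justification is available. The Kameko map is dual to the homology operation $\Sq^0$ restricted to $P_{\A}H_*(BV_5)$, which is induced by the Verschiebung/doubling and is natural in $V_5$. Equivalently, $\psi_5$ satisfies $\psi_5(g f)\equiv g\,\psi_5(f)$ modulo hits, and this suffices because every map in the argument is taken on $\QP_5$.

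I do not expect any step to be a real obstacle. All the computational content is already contained in Theorem~\ref{thm:invariant-line}.
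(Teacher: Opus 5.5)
Your proposal is correct and follows the paper's proof. The cases $d=0$ (and $d=1$) come straight from Theorem~\ref{thm:invariant-line}, and for $d\geq2$ the $\GL(5,\F_2)$-equivariant iterated Kameko isomorphism of Corollary~\ref{cor:kameko-periodicity} restricts to an isomorphism of fixed subspaces; your explicit fixed-vector argument and well-definedness remark for $[\zeta_d]$ only spell out what the paper's short proof leaves implicit.
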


\begin{proof}
The case \(d=0\) is Theorem~\ref{thm:invariant-line}.  For \(d\geq2\), Corollary~\ref{cor:kameko-periodicity} is \(\GL(5,\F_2)\)-equivariant, so it induces an isomorphism on fixed subspaces.  The degree-\(49\) result then gives the stated line.
\end{proof}

\subsection{Evaluation of the transfer}

The fifth-line computations of Lin \cite{Lin2008} and Chen \cite{Chen2011} give
\[
\Ext_{\A}^{5,5+N_0}(\F_2,\F_2)=0
\]
and, for every \(d\geq1\),
\[
\Ext_{\A}^{5,5+N_d}(\F_2,\F_2)
=\F_2\{h_{d+4}f_{d-1}\}.
\]
Here \(h_j\in\Ext_{\A}^{1,2^j}(\F_2,\F_2)\), while \(f_j\) denotes the corresponding \(\Sq^0\)-family on the fourth line.  The rank-one classes \(h_j\) and the fourth-line family \(f_j\) are detected by the relevant algebraic transfers, and the total Singer transfer is multiplicative; see Singer \cite{Singer1989} and Nam \cite{Nam2004}.

\begin{theorem}\label{thm:transfer-iso}
For every \(d\geq0\), the fifth Singer cohomological transfer
\[
\varphi_5^{\A}:
\bigl((\QP_5)_{N_d}^{\GL(5,\F_2)}\bigr)^*
\longrightarrow
\Ext_{\A}^{5,5+N_d}(\F_2,\F_2)
\]
is an isomorphism.  For \(d=0\), it is the unique map between zero vector spaces.  For \(d\geq1\), if \(\zeta_d^\vee\) is the functional dual to \([\zeta_d]\), then
\[
\varphi_5^{\A}(\zeta_d^\vee)=h_{d+4}f_{d-1}\neq0.
\]
\end{theorem}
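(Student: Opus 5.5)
The case \(d=0\) needs no computation. By Corollary~\ref{cor:generic-invariants} the source \(\bigl((\QP_5)_{22}^{\GL(5,\F_2)}\bigr)^*\) is zero, and by Lin and Chen the target \(\Ext_{\A}^{5,27}(\F_2,\F_2)\) is zero. So \(\varphi_5^{\A}\) is the unique map between zero spaces.

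For \(d\geq1\) both sides are one-dimensional: the source by Corollary~\ref{cor:generic-invariants}, the target by the fifth-line computations, being spanned by \(h_{d+4}f_{d-1}\). An isomorphism therefore follows once I show \(\varphi_5^{\A}(\zeta_d^\vee)\neq0\). I would obtain this by showing that \(h_{d+4}f_{d-1}\) lies in the image of the transfer, using multiplicativity. Nam's results give elements \(\phi\in\bigl((\QP_4)_{N_d'}^{\GL(4,\F_2)}\bigr)^*\), where \(N_d'=|f_{d-1}|-4\), with \(\varphi_4^{\A}(\phi)=f_{d-1}\). The rank-one transfer is an isomorphism onto \(h_{d+4}\), so there is a class \(\beta\in P_{\A}H_{2^{d+4}-1}(BV_1)\) transferring to \(h_{d+4}\). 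Since the total transfer is an algebra map,
\[
\varphi_5^{\A}(\beta\cdot\phi)=h_{d+4}f_{d-1}.
\]
Here the product \(\beta\cdot\phi\) is taken in the algebra \(\bigoplus_s \F_2\otimes_{\GL(s,\F_2)}P_{\A}H_*(BV_s)\), and it lands in the rank-\(5\), degree-\(N_d\) coinvariants. Because \(h_{d+4}f_{d-1}\neq0\) in \(\Ext\), this product is nonzero, so it is a nonzero element of a one-dimensional space. Hence it equals \(\zeta_d^\vee\) under the duality \(\bigl((\QP_5)_{N_d}^{\GL})^*\cong \F_2\otimes_{\GL}P_{\A}H_{N_d}\), and the formula \(\varphi_5^{\A}(\zeta_d^\vee)=h_{d+4}f_{d-1}\) follows.

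As a consistency check for \(d\geq2\), I would also note a second route to the same conclusion. Corollary~\ref{cor:kameko-periodicity} identifies the sources, and the Kameko \(\Sq^0\) is compatible with the classical \(\Sq^0\) on \(\Ext\) under the transfer (Boardman, Minami). Since \(\Sq^0(h_{d+3}f_{d-2})=h_{d+4}f_{d-1}\), this reduces everything to \(d=1\), where one only needs \(h_5f_0\) in the image. That in turn again comes from \(h_5\) (rank one) times \(f_0\), which Nam shows is detected by \(\varphi_4\).

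The main obstacle is the hit status of \(f_{d-1}\) under \(\varphi_4\) for all \(d\geq1\): I need an explicit reference that \(f_j\) is in the image of the fourth transfer for every \(j\geq0\). I would first try to cite the \(d-1=0\) case, \(f_0\), from Nam or Sum's rank-four computations, and then propagate it via \(\Sq^0\)-compatibility of the rank-four transfer. Failing that, I would fall back on the \(d=1\) reduction described above, which only requires \(f_0\).
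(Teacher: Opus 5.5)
Your proposal is correct and follows the paper's proof. For \(d=0\) you use a zero source and a zero target. For \(d\geq1\) you use that both sides are one-dimensional, together with multiplicativity of the total transfer applied to \(h_{d+4}\) (rank one) and \(f_{d-1}\) (rank four, cited to Nam), to get \(\varphi_5^{\A}\neq0\). Your extra \(\Sq^0\)-compatibility route is a valid refinement the paper does not use: it reduces the needed rank-four input to \(f_0\) alone, via \(h_5f_0\).
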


\begin{proof}
When \(d=0\), Theorem~\ref{thm:invariant-line} makes the source zero, and the cited \(\Ext\)-calculation makes the target zero.  Thus the map is an isomorphism.

Assume \(d\geq1\).  Corollary~\ref{cor:generic-invariants} shows that the source is one-dimensional.  The fifth-line computation shows that the target is one-dimensional and generated by \(h_{d+4}f_{d-1}\).  The rank-one transfer detects \(h_{d+4}\), the fourth transfer detects \(f_{d-1}\), and multiplicativity of the total transfer produces the product \(h_{d+4}f_{d-1}\) in rank five.  Consequently \(\varphi_5^{\A}\) is nonzero.  A nonzero linear map between one-dimensional \(\F_2\)-vector spaces is an isomorphism, and the asserted value follows after normalizing \(\zeta_d^\vee\) as the nonzero dual basis element.
\end{proof}

\section{A geometric generator in unoriented cobordism}\label{sec:cobordism}

This section determines the complete degree-\(49\) cobordism dimensions, constructs an explicit manifold representing the nonzero indecomposable class, and then compares that geometric class with the rank-five Steenrod-theoretic invariant. The first part expands the Hilbert--Poincar\'e calculation into an exact recursion. The second part proves that the Milnor hypersurface \(H_{2,48}\) is indecomposable by a tangential Stiefel--Whitney number. The final part shows that the tautological map to \(BV_5\) does not produce an \(\A\)-annihilated homology class, thereby locating precisely the boundary of the geometric comparison.

\subsection{The degree-\texorpdfstring{\(49\)}{49} cobordism group}

Let \(\Nfrak_*=MO_*\) be the unoriented cobordism ring of closed smooth manifolds, graded by dimension and multiplied by Cartesian product. The Pontryagin--Thom construction identifies \(\Nfrak_n\) with \(\pi_n(MO)\) \cite{Thom1954,Atiyah1961}. Thom's structure theorem gives an isomorphism of graded \(\F_2\)-algebras
\[
\Nfrak_*
\cong
\F_2[\xi_j\mid j\geq1,\ j\neq2^r-1],
\qquad |\xi_j|=j;
\]
see \cite{Thom1954,Milnor1960,Stong1968}. Write
\[
\Nfrak_+=\bigoplus_{n>0}\Nfrak_n,
\qquad
Q\Nfrak_*:=\Nfrak_+/(\Nfrak_+)^2.
\]

\begin{proposition}\label{prop:cobordism49}
The degree-\(49\) indecomposable quotient is one-dimensional, the full degree-\(49\) cobordism group has dimension \(5692\), and the decomposable subspace has dimension \(5691\):
\[
\dim_{\F_2}(Q\Nfrak_*)_{49}=1,
\qquad
\dim_{\F_2}\Nfrak_{49}=5692,
\qquad
\dim_{\F_2}(\Nfrak_+)^2_{49}=5691.
\]
\end{proposition}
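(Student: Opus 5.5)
Since Thom's structure theorem identifies \(\Nfrak_*\) with the polynomial algebra \(\F_2[\xi_j\mid j\geq1,\ j\neq2^r-1]\), all three assertions are statements about this graded polynomial ring, and I will treat them purely combinatorially. For a graded polynomial algebra on homogeneous generators, the indecomposable quotient \(Q\Nfrak_*\) has basis given by the classes of the generators. Hence \(\dim(Q\Nfrak_*)_n\) equals the number of generators in degree \(n\), namely \(1\) if \(n\neq2^r-1\) and \(0\) otherwise. Since \(49\) is not of the form \(2^r-1\) (the nearby values are \(31\) and \(63\)), this gives \(\dim(Q\Nfrak_*)_{49}=1\). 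The third identity then follows from the short exact sequence \(0\to(\Nfrak_+)^2_{49}\to\Nfrak_{49}\to(Q\Nfrak_*)_{49}\to0\), so only the middle dimension \(5692\) requires real work.

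For \(\dim\Nfrak_{49}\), I will use the Hilbert--Poincar\'e series
\[
\sum_{n\geq0}\dim_{\F_2}\Nfrak_n\,t^n=\prod_{j\geq1,\ j\neq2^r-1}\frac{1}{1-t^j}.
\]
Thus \(\dim\Nfrak_n\) is the number of partitions of \(n\) into parts not of the form \(2^r-1\), that is, with no part equal to \(1,3,7,15,31\). To obtain an exact, checkable number I will convert this into a recursion. Set \(S=\{1\leq j\leq49\}\setminus\{1,3,7,15,31\}\), and let \(p_k(n)\) count partitions of \(n\) into parts from the first \(k\) elements of \(S\). These satisfy
\[
p_k(n)=p_{k-1}(n)+p_k(n-s_k),
\]
with \(p_0(n)=\delta_{n,0}\). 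This is the standard coin-change dynamic programme; only parts up to \(49\) matter in degree \(49\).

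As a cross-check, I can also use the Euler-type recursion \(n\,a(n)=\sum_{k=1}^n\sigma'(k)\,a(n-k)\). Here \(a(n)=\dim\Nfrak_n\) and \(\sigma'(k)\) is the sum of the divisors of \(k\) that are not of the form \(2^r-1\). A third check is to take the ordinary partition function \(p(n)\) and divide out the factors \(1-t^j\) for \(j\in\{1,3,7,15,31\}\), i.e.\ multiply successively by \(1-t^j\) and truncate at degree \(49\). Running two independent evaluations and getting agreement on \(5692\) is what I would present.

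The only genuine obstacle is the arithmetic of this partition count; the structural parts are immediate from Thom's theorem. I would tabulate the intermediate values \(a(n)\) for small \(n\) against known values (\(a(2)=1\), \(a(4)=2\), \(a(5)=2\), \(a(6)=3\), and so on). This sanity check confirms that the correct exceptional set has been excluded before trusting the value at \(n=49\).
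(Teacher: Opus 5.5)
Your proposal is correct and follows essentially the paper's route: Thom's theorem identifies $(Q\Nfrak_*)_{49}$ with the span of $\overline\xi_{49}$, the decomposable dimension is $5692-1$, and your recursion $p_k(n)=p_{k-1}(n)+p_k(n-s_k)$ is the telescoped form of the paper's $c_j(n)=\sum_q c_{j-1}(n-qj)$. Your cross-check through ordinary partition numbers confirms the value: it gives $6407-675-40+0=5692$, these being the coefficients of $t^{49},t^{34},t^{18},t^{3}$ in $\prod_{n\geq1}(1-t^n)^{-1}\cdot(1-t)(1-t^3)(1-t^7)$. One entry in your sanity table is wrong: $\dim_{\F_2}\Nfrak_5=1$, not $2$, because $5$ itself is the only partition of $5$ avoiding the parts $1$ and $3$, so checking against $a(5)=2$ would raise a false alarm.
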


\begin{proof}
Since \(49\neq2^r-1\) for every \(r\geq0\), the polynomial description contains a generator \(\xi_{49}\) of degree \(49\). Its image
\[
\overline\xi_{49}\in(Q\Nfrak_*)_{49}
\]
is nonzero because every element of \((\Nfrak_+)^2\) is a linear combination of polynomial monomials containing at least two positive-dimensional factors, whereas \(\xi_{49}\) contains one factor.

Conversely, let
\[
M=\prod_j\xi_j^{e_j}
\]
be a polynomial monomial of degree \(49\), so that \(\sum_j j e_j=49\). If \(\sum_j e_j=1\), then the degree equation forces \(M=\xi_{49}\). If \(M\neq\xi_{49}\), then \(\sum_j e_j\geq2\), and hence \(M\in(\Nfrak_+)^2\). Therefore
\[
(Q\Nfrak_*)_{49}=\F_2\{\overline\xi_{49}\},
\]
which proves \(\dim_{\F_2}(Q\Nfrak_*)_{49}=1\).

It remains to calculate the full homogeneous dimension. The Hilbert--Poincar\'e series is
\[
P_{\Nfrak}(t)
=
\sum_{n\geq0}\dim_{\F_2}(\Nfrak_n)t^n
=
\prod_{\substack{j\geq1\\j\neq2^r-1}}\frac{1}{1-t^j}.
\]
Only factors with \(j\leq49\) can contribute to the coefficient of \(t^{49}\). For \(0\leq j\leq49\), let \(c_j(n)\) be the coefficient of \(t^n\) after all possible generator degrees not exceeding \(j\) have been considered. Set
\[
c_0(0)=1,
\qquad
c_0(n)=0\quad(n>0).
\]
The excluded degrees not exceeding \(49\) are
\[
1,\ 3,\ 7,\ 15,\ 31.
\]
If \(j\) is excluded, no generator of degree \(j\) occurs, and therefore
\[
c_j(n)=c_{j-1}(n).
\]
Suppose that \(j\) is allowed. Multiplication by
\[
(1-t^j)^{-1}=1+t^j+t^{2j}+\cdots
\]
shows that a degree-\(n\) monomial contains \(\xi_j\) with an exponent
\[
q\in\left\{0,1,\ldots,\left\lfloor\frac{n}{j}\right\rfloor\right\}.
\]
After the factor \(\xi_j^q\) is selected, the remaining factors must have total degree \(n-qj\). Hence
\[
c_j(n)
=
\sum_{q=0}^{\lfloor n/j\rfloor}c_{j-1}(n-qj)
\]
for every allowed \(j\). This proves the recursion directly from the product formula.

Exact integer evaluation gives
\[
\begin{array}{c|rrrrr}
j&10&20&30&40&49\\ \hline
c_j(49)&883&4445&5559&5678&5692.
\end{array}
\]
Consequently,
\[
[t^{49}]P_{\Nfrak}(t)=c_{49}(49)=5692,
\]
and therefore \(\dim_{\F_2}\Nfrak_{49}=5692\).

The size of the decomposable subspace can be seen concretely. The elements
\[
\xi_2^2\xi_4^{10}\xi_5,
\qquad
\xi_2\xi_4^8\xi_5^3,
\qquad
\xi_2^7\xi_5^7
\]
are distinct nonzero polynomial monomials in \(\Nfrak_{49}\), since their degrees are
\[
2\cdot2+10\cdot4+5=49,
\]
\[
2+8\cdot4+3\cdot5=49,
\]
and
\[
7\cdot2+7\cdot5=49,
\]
respectively. Each contains at least two positive-dimensional factors and therefore maps to zero in \(Q\Nfrak_*\). Since precisely one dimension survives modulo products, one obtains
\[
\dim_{\F_2}(\Nfrak_+)^2_{49}=5692-1=5691.
\]
\end{proof}

\subsection{The Milnor hypersurface \texorpdfstring{\(H_{2,48}\)}{H(2,48)}}

For integers \(0<m\leq n\), the real Milnor hypersurface is
\[
H_{m,n}
=
\left\{([x],[y])\in\RP^m\times\RP^n
\ \middle|\
\sum_{i=0}^{m}x_iy_i=0
\right\}.
\]
It is a smooth hypersurface of dimension \(m+n-1\). These manifolds and their characteristic numbers are discussed in \cite{Milnor1965,Stong1968}. Let \(\lambda\) and \(\mu\) be the pullbacks of the tautological real line bundles from \(\RP^m\) and \(\RP^n\), respectively, and write
\[
a=w_1(\lambda),
\qquad
b=w_1(\mu).
\]
The defining equation is a transverse section of \(\lambda\otimes\mu\). Thus the normal line bundle is \((\lambda\otimes\mu)|_{H_{m,n}}\), and the Poincar\'e dual of the inclusion \(i:H_{m,n}\hookrightarrow\RP^m\times\RP^n\) is \(a+b\).

For a real vector bundle \(E\), let \(s_k(E)\) denote the \(k\)-th Newton polynomial in its Stiefel--Whitney classes. If the formal Stiefel--Whitney roots of \(E\) are \(t_1,\ldots,t_r\), then \(s_k(E)=\sum_it_i^k\). Newton's identities express this class as a polynomial in \(w_1(E),\ldots,w_k(E)\). It is stable and additive under Whitney sum.

\begin{theorem}\label{thm:milnor-generator}
The Milnor hypersurface
\[
H_{2,48}\subset\RP^2\times\RP^{48}
\]
is a closed smooth \(49\)-manifold whose cobordism class has nonzero image in \((Q\Nfrak_*)_{49}\). More precisely,
\[
\left\langle s_{49}(TH_{2,48}),[H_{2,48}]\right\rangle=1\in\F_2.
\]
Consequently,
\[
[H_{2,48}]\bmod(\Nfrak_+)^2
=
\overline\xi_{49},
\]
after choosing \(\xi_{49}\) so that its indecomposable class is represented by \(H_{2,48}\).
\end{theorem}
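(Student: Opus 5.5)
The plan is to compute the tangent bundle of $H=H_{2,48}$ stably, evaluate $s_{49}$ against the fundamental class by pushing forward to $\RP^2\times\RP^{48}$, and then use the standard criterion (Thom/Milnor) that a class in $\Nfrak_n$ with $n\neq 2^r-1$ is indecomposable if and only if its $s_n$-number is nonzero. Together with Proposition~\ref{prop:cobordism49}, this identifies the image as $\overline\xi_{49}$.

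\textbf{Step 1 (stable tangent bundle).} From $T(\RP^m)\oplus\varepsilon\cong(m+1)\lambda$ and the normal bundle description above, we get
\[
TH\oplus(\lambda\otimes\mu)|_H\oplus\varepsilon^2\cong\bigl(3\lambda\oplus49\mu\bigr)|_H .
\]
Additivity of $s_k$ and $s_k(\text{line bundle with } w_1=c)=c^k$ then give, in $H^{49}(H;\F_2)$,
\[
s_{49}(TH)=\bigl(3a^{49}+49b^{49}+(a+b)^{49}\bigr)\big|_H .
\]
Here $a^{49}=0$, since $a^3=0$ on $\RP^2$. For the same reason, $(a+b)^{49}$ reduces to $b^{49}+49ab^{48}+\binom{49}{2}a^2b^{47}$ with coefficients mod $2$.

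\textbf{Step 2 (evaluation).} Since $i_*[H]$ is Poincar\'e dual to $a+b$, we have $\langle x|_H,[H]\rangle=\langle x(a+b),[\RP^2\times\RP^{48}]\rangle$, and the only nonzero top monomial is $a^2b^{48}$. Multiplying by $a+b$ and using $b^{49}=0$, only $a\cdot ab^{48}$ and $b\cdot a^2b^{47}$ survive. The contributions are:
\begin{itemize}
\item from $49b^{49}$ and $b^{49}$: the term $b^{49}\cdot(a+b)$ contributes nothing, because $ab^{49}$ and $b^{50}$ vanish;
\item from $49ab^{48}$: a coefficient $49\equiv1$;
\item from $\binom{49}{2}a^2b^{47}$: a coefficient $\binom{49}{2}=1176\equiv0$.
\end{itemize}
The total is $1$, which matches the closed form $\binom{50}{2}=1225\equiv1\pmod 2$ quoted in the introduction. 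I would double-check this last congruence bookkeeping carefully, since the binomial parities are the only place the answer can flip. A convenient cross-check is Milnor's general formula $\langle s_{m+n-1},[H_{m,n}]\rangle=-\binom{m+n}{m}$ mod $2$ in the stated range, which here reads $\binom{50}{2}$.

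\textbf{Step 3 (indecomposability).} $s_n$ is primitive: $s_n(E\times F)=s_n(E)\times1+1\times s_n(F)$. Hence $\langle s_n(T(M\times N)),[M\times N]\rangle=0$ whenever $\dim M,\dim N>0$, so the $s_{49}$-number vanishes on $(\Nfrak_+)^2_{49}$. Since it is nonzero on $[H_{2,48}]$, the class of $H_{2,48}$ is not in $(\Nfrak_+)^2$. By Proposition~\ref{prop:cobordism49}, $(Q\Nfrak_*)_{49}$ is one-dimensional, so $[H_{2,48}]\bmod(\Nfrak_+)^2=\overline\xi_{49}$, and we may take $\xi_{49}=[H_{2,48}]$.

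The main obstacle is the careful stable tangent bundle identity and the sign/parity bookkeeping in Step 2. Smoothness of $H$ and transversality are routine: the defining gradient is nonvanishing since $m\le n$.
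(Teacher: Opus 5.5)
Your proposal is correct and follows essentially the same route as the paper: the same stable tangent bundle identity, the same evaluation against $(a+b)\cap[\RP^2\times\RP^{48}]$, and the same primitivity-of-$s_{49}$ argument combined with Proposition~\ref{prop:cobordism49}. Your term-by-term count $49+\binom{49}{2}$ is the paper's coefficient $\binom{50}{2}$ of $a^2b^{48}$ in $(a+b)^{50}$, split by Pascal's rule.
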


\begin{proof}
The dimension is
\[
\dim H_{2,48}=2+48-1=49.
\]
The stable tangent relation for real projective space,
\[
T\RP^r\oplus\varepsilon^1\cong(r+1)\lambda_r,
\]
together with the normal line bundle of the hypersurface gives
\[
TH_{2,48}\oplus(\lambda\otimes\mu)|_{H_{2,48}}\oplus\varepsilon^2
\cong
(3\lambda\oplus49\mu)|_{H_{2,48}}.
\]
Therefore
\[
w(TH_{2,48})
=
i^*\left(\frac{(1+a)^3(1+b)^{49}}{1+a+b}\right).
\]
Additivity of Newton classes and stability under trivial summands yield
\[
s_{49}(TH_{2,48})
=
i^*\left(3a^{49}+49b^{49}+(a+b)^{49}\right).
\]
The cohomology ring of the ambient product is
\[
H^*(\RP^2\times\RP^{48};\F_2)
=
\F_2[a,b]/(a^3,b^{49}).
\]
Hence \(a^{49}=0\) and \(b^{49}=0\), so
\[
s_{49}(TH_{2,48})=i^*(a+b)^{49}.
\]
Since \(i_!(1)=a+b\), naturality of the Gysin pairing gives
\[
\begin{aligned}
\left\langle s_{49}(TH_{2,48}),[H_{2,48}]\right\rangle
&=
\left\langle i^*(a+b)^{49},[H_{2,48}]\right\rangle\\
&=
\left\langle(a+b)^{49}i_!(1),[\RP^2\times\RP^{48}]\right\rangle\\
&=
\left\langle(a+b)^{50},[\RP^2\times\RP^{48}]\right\rangle.
\end{aligned}
\]
Only the coefficient of \(a^2b^{48}\) contributes. It is
\[
\binom{50}{2}=1225\equiv1\pmod2.
\]
Equivalently, Lucas' theorem gives the same parity because
\[
50=(110010)_2,
\qquad
2=(000010)_2,
\]
and every nonzero binary digit of \(2\) occurs in a position where \(50\) has a nonzero digit. Thus
\[
\left\langle s_{49}(TH_{2,48}),[H_{2,48}]\right\rangle=1.
\]

It remains to explain why this number detects indecomposability. Let \(M^p\) and \(N^q\) be positive-dimensional closed manifolds with \(p+q=49\). Since
\[
T(M\times N)\cong\pi_M^*TM\oplus\pi_N^*TN,
\]
additivity gives
\[
s_{49}(T(M\times N))
=
\pi_M^*s_{49}(TM)+\pi_N^*s_{49}(TN).
\]
The first term vanishes because \(49>p\), and the second vanishes because \(49>q\). Therefore the characteristic number \(\langle s_{49},-\rangle\) vanishes on every product of positive-dimensional manifolds and hence on \((\Nfrak_+)^2_{49}\). Its nonzero value on \(H_{2,48}\) proves that \([H_{2,48}]\) has nonzero image in \((Q\Nfrak_*)_{49}\). Proposition~\ref{prop:cobordism49} shows that this quotient is one-dimensional, so the image is its generator.
\end{proof}

\begin{remark}\label{rem:other-milnor-generators}
The geometric representative is not unique. For \(m+n=50\), the same calculation gives
\[
\left\langle s_{49}(TH_{m,n}),[H_{m,n}]\right\rangle
=\binom{50}{m}\pmod2.
\]
Under the convention \(2\leq m\leq n\), Lucas' theorem shows that this binomial coefficient is odd precisely for
\[
m=2,\ 16,\ 18.
\]
Thus \(H_{2,48}\), \(H_{16,34}\), and \(H_{18,32}\) all represent the nonzero indecomposable class. Dold's manifolds give another classical system of explicit polynomial generators \cite{Dold1956}.
\end{remark}

\subsection{The tautological map to \texorpdfstring{\(BV_5\)}{BV5}}

The Pontryagin--Thom collapse associated with \(H_{2,48}\) produces the class \([H_{2,48}]\in\pi_{49}(MO)=\Nfrak_{49}\). It does not by itself produce a class in \(P_{\A}H_{49}(BV_5;\F_2)\). The two tautological line bundles do define an evident map
\[
f:H_{2,48}\longrightarrow\RP^\infty\times\RP^\infty=BV_2
\longrightarrow BV_5,
\]
where the last arrow is induced by the inclusion of the first two factors.

Let \(\beta_r\in H_r(\RP^\infty;\F_2)\) be dual to the degree-\(r\) monomial in \(H^*(\RP^\infty;\F_2)\). We write tensor products in \(H_*(BV_5;\F_2)\) with the final three factors understood to be \(\beta_0\).

\begin{proposition}\label{prop:tautological-obstruction}
The fundamental class of \(H_{2,48}\) satisfies
\[
f_*[H_{2,48}]
=
\beta_1\otimes\beta_{48}
+
\beta_2\otimes\beta_{47}
\in H_{49}(BV_5;\F_2),
\]
and
\[
\Sq_*^2\bigl(f_*[H_{2,48}]\bigr)
=
\beta_1\otimes\beta_{46}
\neq0.
\]
Consequently,
\[
f_*[H_{2,48}]\notin P_{\A}H_{49}(BV_5;\F_2).
\]
\end{proposition}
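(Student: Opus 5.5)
The plan is to compute \(f_*[H_{2,48}]\) by duality, pairing it against the monomial basis of \(H^{49}(BV_5;\F_2)\). Then I will apply the dual Steenrod operation \(\Sq^2_*\) term by term, using the dual Cartan formula and the action of \(\Sq^i_*\) on \(H_*(\RP^\infty;\F_2)\).

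\emph{Step 1: the coefficients of \(f_*[H_{2,48}]\).} Under the composite \(H_{2,48}\to BV_2\to BV_5\), the classes \(x_3,x_4,x_5\) pull back to \(0\), while \(x_1\mapsto i^*a\) and \(x_2\mapsto i^*b\). So only monomials \(x_1^ix_2^j\) with \(i+j=49\) can pair nontrivially. For such a monomial, the Gysin identity already used in the proof of Theorem~\ref{thm:milnor-generator} gives
\[
\langle i^*(a^ib^j),[H_{2,48}]\rangle=\langle a^ib^j(a+b),[\RP^2\times\RP^{48}]\rangle .
\]
This equals the coefficient of \(a^2b^{48}\) in \(a^ib^j(a+b)\) modulo \((a^3,b^{49})\).
\begin{itemize}
\item For \((i,j)=(1,48)\) and \((2,47)\), the coefficient is \(1\).
\item For \((0,49)\), it vanishes because \(b^{49}=0\).
\item For \(i\geq3\), it vanishes because \(a^3=0\).
\end{itemize}
Since \(\beta_r\) is dual to the degree-\(r\) monomial, this yields \(f_*[H_{2,48}]=\beta_1\otimes\beta_{48}+\beta_2\otimes\beta_{47}\).

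\emph{Step 2: applying \(\Sq^2_*\).} I will use two facts. First, \(\Sq^k_*\) is dual to \(\Sq^k\) and lowers degree by \(k\). Second, on one variable, \(\Sq^k(x^{r-k})=\binom{r-k}{k}x^r\), so
\[
\Sq^k_*\beta_r=\binom{r-k}{k}\beta_{r-k}.
\]
Together with the dual Cartan formula \(\Sq^2_*(u\otimes v)=\sum_{k=0}^2\Sq^k_*u\otimes\Sq^{2-k}_*v\), this gives the following terms.
\begin{itemize}
\item For \(\beta_1\otimes\beta_{48}\): the term \(\beta_1\otimes\Sq^2_*\beta_{48}\) has coefficient \(\binom{46}{2}=1035\), which is odd. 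The terms involving \(\Sq^1_*\beta_1\) and \(\Sq^2_*\beta_1\) vanish, since \(\binom{0}{1}=0\) and the degree is too low.
\item For \(\beta_2\otimes\beta_{47}\): \(\Sq^2_*\beta_2=\binom{0}{2}\beta_0=0\). The term \(\Sq^1_*\beta_2\otimes\Sq^1_*\beta_{47}\) carries \(\binom{1}{1}\binom{46}{1}\equiv0\). The term \(\beta_2\otimes\Sq^2_*\beta_{47}\) carries \(\binom{45}{2}=990\equiv0\).
\end{itemize}
Summing, \(\Sq^2_*(f_*[H_{2,48}])=\beta_1\otimes\beta_{46}\). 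This is a basis element of \(H_{47}(BV_5;\F_2)\), hence nonzero. Since \(P_{\A}H_*\) consists of classes annihilated by all positive \(\Sq^k_*\), the final claim follows.

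The only delicate point is bookkeeping of conventions: the left versus right action in homology, and the exact form of \(\Sq^k_*\beta_r\). I would verify this directly by dualizing \(\Sq^k(x^{r-k})=\binom{r-k}{k}x^r\), so that no conventional ambiguity affects the parities \(\binom{46}{2}\), \(\binom{45}{2}\) and \(\binom{46}{1}\). Everything else is a finite parity check.
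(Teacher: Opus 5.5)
Your proposal is correct and follows the paper's proof essentially step for step: the same Gysin-pairing computation gives the two coefficients of \(f_*[H_{2,48}]\), and then the dual Cartan formula with \(\Sq^k_*\beta_r=\binom{r-k}{k}\beta_{r-k}\) reduces everything to the same three parity checks \(\binom{46}{2}\), \(\binom{45}{2}\) and \(\binom{1}{1}\binom{46}{1}\). Your explicit remark that \(x_3,x_4,x_5\) pull back to zero along \(BV_2\to BV_5\) makes precise a point that the paper leaves implicit.
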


\begin{proof}
Let \(u,v\in H^1(BV_2;\F_2)\) be the generators pulled back to \(a,b\) on the ambient product. For \(0\leq r\leq49\), the defining Poincar\'e dual gives
\[
\begin{aligned}
\left\langle u^rv^{49-r},f_*[H_{2,48}]\right\rangle
&=
\left\langle i^*(a^rb^{49-r}),[H_{2,48}]\right\rangle\\
&=
\left\langle a^rb^{49-r}(a+b),[\RP^2\times\RP^{48}]\right\rangle.
\end{aligned}
\]
The coefficient of \(a^2b^{48}\) is nonzero only when \(r=1\) or \(r=2\). Hence
\[
f_*[H_{2,48}]
=
\beta_1\otimes\beta_{48}
+
\beta_2\otimes\beta_{47}.
\]

The homological Steenrod action is dual to the cohomological action and satisfies
\[
\Sq_*^k(\beta_n)
=
\binom{n-k}{k}\beta_{n-k}.
\]
The Cartan formula gives
\[
\Sq_*^2(\beta_1\otimes\beta_{48})
=
\binom{46}{2}\beta_1\otimes\beta_{46}
=
\beta_1\otimes\beta_{46},
\]
because \(\binom{46}{2}=1035\) is odd. For the second summand, the three Cartan contributions vanish:
\[
\binom{45}{2}=990\equiv0\pmod2,
\qquad
\binom{1}{1}\binom{46}{1}=46\equiv0\pmod2,
\qquad
\binom{0}{2}=0.
\]
Therefore
\[
\Sq_*^2\bigl(f_*[H_{2,48}]\bigr)
=
\beta_1\otimes\beta_{46}\neq0.
\]
By definition, an element of \(P_{\A}H_{49}(BV_5;\F_2)\) is annihilated by every positive homological Steenrod operation. The displayed nonzero square proves the assertion.
\end{proof}

\subsection{The exact algebraic parallel}

\begin{theorem}\label{thm:exact-parallel}
In degree \(49\), the following statements hold:
\[
(Q\Nfrak_*)_{49}=\F_2\{[H_{2,48}]\bmod(\Nfrak_+)^2\},
\]
\[
(\QP_5)_{49}^{\GL(5,\F_2)}=\F_2\{[\zeta]\},
\]
and
\[
\varphi_5^{\A}(\zeta^\vee)=h_5f_0\in\Ext_{\A}^{5,54}(\F_2,\F_2).
\]
The two displayed one-dimensional spaces are not identified by the Pontryagin--Thom construction or by the tautological map determined by the defining line bundles of \(H_{2,48}\). In particular, the evident pushed fundamental class cannot represent \(\zeta^\vee\).
\end{theorem}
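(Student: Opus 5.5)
This theorem mostly assembles earlier results, so the plan is to cite them in order and then argue the non-identification claim separately. For the first display, combine Proposition~\ref{prop:cobordism49}, which shows that $(Q\Nfrak_*)_{49}$ is one-dimensional, with Theorem~\ref{thm:milnor-generator}. That theorem shows the Stiefel--Whitney number $\langle s_{49},-\rangle$ vanishes on $(\Nfrak_+)^2_{49}$ and equals $\binom{50}{2}\equiv1$ on $H_{2,48}$. Hence the class of $H_{2,48}$ modulo decomposables is nonzero in a one-dimensional space, so it spans it.

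The second display is exactly Theorem~\ref{thm:invariant-line}. The third display is the case $d=1$ of Theorem~\ref{thm:transfer-iso}, noting that $\zeta_1=\zeta$, $h_{d+4}f_{d-1}=h_5f_0$, and $5+N_1=54$.

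For the non-identification statement, I will first make precise what an identification would mean. The Pontryagin--Thom construction lands in $\pi_{49}(MO)$, not in $H_{49}(BV_5;\F_2)$. Any passage from a manifold to the transfer source must therefore go through a map $g\colon H_{2,48}\to BV_5$ and the pushed-forward class $g_*[H_{2,48}]$, followed by projection to coinvariants. This class must lie in $P_{\A}H_{49}(BV_5;\F_2)$, since by the duality recalled in Section~\ref{sec:preliminaries} that subspace is dual to $(\QP_5)_{49}$. A representative of $\zeta^\vee$ in $\F_2\otimes_{\GL(5,\F_2)}P_{\A}H_{49}(BV_5;\F_2)$ must in particular be $\A$-annihilated.

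For the tautological map $f$, Proposition~\ref{prop:tautological-obstruction} gives $\Sq^2_*f_*[H_{2,48}]=\beta_1\otimes\beta_{46}\neq0$. Hence $f_*[H_{2,48}]\notin P_{\A}H_{49}$, and it does not define an element of the source of $\varphi_5^{\A}$ at all, let alone $\zeta^\vee$. I will also observe that precomposing with any $g\in\GL(5,\F_2)$ does not help. Such $g$ commutes with $\Sq^2_*$, so $g_*f_*[H_{2,48}]$ is still not annihilated. The whole $\GL$-orbit of the tautological class is therefore excluded.

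The main obstacle is wording rather than computation: how strong the phrase ``not identified by the Pontryagin--Thom construction'' should be read. I will interpret it as meaning that neither the bordism class itself, which lives in a different group, nor its tautological homology image provides the identification. I will not claim that no map whatsoever $H_{2,48}\to BV_5$ realizes $\zeta^\vee$, since that stronger statement is not supported by the earlier results.
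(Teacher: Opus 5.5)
Your proposal is correct and follows essentially the paper's proof. The three displays come from Theorem~\ref{thm:milnor-generator} (together with Proposition~\ref{prop:cobordism49}), Theorem~\ref{thm:invariant-line}, and the case $d=1$ of Theorem~\ref{thm:transfer-iso}, and the non-identification rests on the Pontryagin--Thom target being $\pi_{49}(MO)$ and on $\Sq^2_*f_*[H_{2,48}]\neq0$ from Proposition~\ref{prop:tautological-obstruction}. Your extra remark that the whole $\GL(5,\F_2)$-orbit is excluded is a valid small strengthening the paper does not state; strictly it concerns post-composition $g\circ f$, where $g_*$ is an isomorphism commuting with $\Sq^2_*$.
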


\begin{proof}
The first equality is Theorem~\ref{thm:milnor-generator}, the second is Theorem~\ref{thm:invariant-line}, and the transfer statement is the case \(d=1\) of Theorem~\ref{thm:transfer-iso}. The Pontryagin--Thom construction has target \(\pi_{49}(MO)\), whereas \(\zeta^\vee\) belongs to the \(\GL(5,\F_2)\)-coinvariants of \(P_{\A}H_{49}(BV_5;\F_2)\). These targets are not canonically identified. Proposition~\ref{prop:tautological-obstruction} further shows that the most direct class associated with the two tautological line bundles is not even \(\A\)-annihilated. It therefore cannot represent the functional dual of \([\zeta]\).
\end{proof}

\begin{remark}\label{rem:geometric-boundary}
Theorem~\ref{thm:exact-parallel} supplies an explicit geometric representative of the cobordism generator but not a geometric realization of the hit-problem invariant. A construction realizing \(\zeta^\vee\) would require additional geometric data producing a class in \(P_{\A}H_{49}(BV_5;\F_2)\), a proof that its coinvariant is nonzero, and a nonzero pairing with \([\zeta]\). The dimension equality alone cannot provide these ingredients.
\end{remark}

\appendix

\section{Complete degree-\texorpdfstring{\(49\)}{49} basis and invariant representative}\label{app:data}

This appendix supplies the computational certificate for Sections~\ref{sec:degree49} and~\ref{sec:transfer}. It records the archived data sources, the exact elimination totals, every degree-\(49\) admissible basis vector in exponent form, and the complete support of the invariant polynomial \(\zeta\).

\subsection{Data and code availability}\label{app:data-availability}
The full output logs of the \texttt{OSCAR} computations underlying Sections~\ref{sec:degree49} and~\ref{sec:transfer}, including the complete admissible bases, weight decompositions, and \(\GL(5,\F_2)\)-invariant computations in degrees \(22\) and \(49\), are deposited at Zenodo under a CC~BY~4.0 license. The degree-\(22\) data are recorded in \cite{ZenodoDeg22}, and the degree-\(49\) data are recorded in \cite{ZenodoDeg49}. The tables below reproduce the degree-\(49\) admissible basis and the invariant \(\zeta\) in full. The archived logs also contain the per-batch elimination trace, including memory use, cumulative column and pivot counts, and timings, summarized in the proof of Proposition~\ref{prop:raw-dimensions}.

\subsection{Computational certificate}

The exact output gives
\[
\begin{aligned}
s&=5,&N_1&=49,&|\mathcal M_{49}|&=292825,\\
\rank(\A^+\Ps_5)_{49}&=289969,&\dim(\QP_5)_{49}&=2856,
\end{aligned}
\]
and
\begingroup
\setlength{\arraycolsep}{4pt}
\[
\begin{array}{c|rrrrrr}
\wt&(3,3,2,2,1)&(5,2,2,2,1)&(5,2,4,1,1)&(5,2,4,3)&(5,4,3,1,1)&(5,4,3,3)\\ \hline
\dim&1891&280&25&5&480&175\\
\dim(-)^{\GL(5,\F_2)}&1&0&0&0&0&0.
\end{array}
\]
\endgroup
The first block contains \(910\) representatives with at least one zero exponent and \(981\) representatives with all exponents positive.  The program also verified \(\rho_i(\zeta)+\zeta\in\A^+\Ps_5\) for every \(1\leq i\leq5\).

\subsection{Weight \texorpdfstring{\(\wt=(3,3,2,2,1)\)}{(3,3,2,2,1)}: dimension \texorpdfstring{\(1891\)}{1891}}\label{app:weight-1}

The following exponent vectors specify the complete basis of \((\QP_5)(3,3,2,2,1)\).

\begingroup
\scriptsize
\renewcommand{\arraystretch}{0.92}
\setlength{\LTleft}{0pt}
\setlength{\LTright}{0pt}

\endgroup

\subsection{Weight \texorpdfstring{\(\wt=(5,2,2,2,1)\)}{(5,2,2,2,1)}: dimension \texorpdfstring{\(280\)}{280}}\label{app:weight-2}

The following exponent vectors specify the complete basis of \((\QP_5)(5,2,2,2,1)\).

\begingroup
\scriptsize
\renewcommand{\arraystretch}{0.92}
\setlength{\LTleft}{0pt}
\setlength{\LTright}{0pt}
%
\endgroup

\subsection{Weight \texorpdfstring{\(\wt=(5,2,4,1,1)\)}{(5,2,4,1,1)}: dimension \texorpdfstring{\(25\)}{25}}\label{app:weight-3}

The following exponent vectors specify the complete basis of \((\QP_5)(5,2,4,1,1)\).

\begingroup
\scriptsize
\renewcommand{\arraystretch}{0.92}
\setlength{\LTleft}{0pt}
\setlength{\LTright}{0pt}
%
\endgroup

\subsection{Weight \texorpdfstring{\(\wt=(5,2,4,3)\)}{(5,2,4,3)}: dimension \texorpdfstring{\(5\)}{5}}\label{app:weight-4}

The following exponent vectors specify the complete basis of \((\QP_5)(5,2,4,3)\).

\begingroup
\scriptsize
\renewcommand{\arraystretch}{0.92}
\setlength{\LTleft}{0pt}
\setlength{\LTright}{0pt}
%
\endgroup

\subsection{Weight \texorpdfstring{\(\wt=(5,4,3,1,1)\)}{(5,4,3,1,1)}: dimension \texorpdfstring{\(480\)}{480}}\label{app:weight-5}

The following exponent vectors specify the complete basis of \((\QP_5)(5,4,3,1,1)\).

\begingroup
\scriptsize
\renewcommand{\arraystretch}{0.92}
\setlength{\LTleft}{0pt}
\setlength{\LTright}{0pt}
%
\endgroup

\subsection{Weight \texorpdfstring{\(\wt=(5,4,3,3)\)}{(5,4,3,3)}: dimension \texorpdfstring{\(175\)}{175}}\label{app:weight-6}

The following exponent vectors specify the complete basis of \((\QP_5)(5,4,3,3)\).

\begingroup
\scriptsize
\renewcommand{\arraystretch}{0.92}
\setlength{\LTleft}{0pt}
\setlength{\LTright}{0pt}
%
\endgroup

\subsection{The invariant polynomial \texorpdfstring{\(\zeta\)}{zeta}}\label{app:zeta}

The invariant representative has \(283\) distinct terms, all belonging to the first weight block.  With the indexing of Appendix~\ref{app:weight-1}, its exact value is
\[
\zeta=\sum_{j\in I_\zeta} b_j^{(1)},
\]
where the \(283\)-element support \(I_\zeta\subset\{1,\ldots,1891\}\) is listed below in increasing order:

\begin{center}
\begingroup
\scriptsize
\setlength{\tabcolsep}{4.4pt}
\renewcommand{\arraystretch}{0.95}
%
\endgroup
\end{center}

Because every \(b_j^{(1)}\) is specified by its exponent vector in Appendix~\ref{app:weight-1}, the support table is a complete term-by-term specification of \(\zeta\); all coefficients are \(1\in\F_2\), and no monomial is repeated.

\end{document}